\numberwithin{equation}{section}
\renewcommand{\epsilon}{\varepsilon}
\renewcommand{\Re}{{\ensuremath{\mathrm{Re\,}}}}
\DeclareSymbolFont{SY}{U}{psy}{m}{n}
\DeclareMathSymbol{\emptyset}{\mathord}{SY}{'306}
\DeclareMathSymbol{\newtimes}{\mathbin}{SY}{'264}
\newcommand{\R}{\mathbb{R}}
\newcommand{\N}{\mathbb{N}}
\newcommand{\supp}{\mathrm{supp}}
\newcommand{\e}{\mathrm{e}}
\newtheorem{introtheorem}{Theorem}{\bf}{\it}
\newtheorem{theorem}{Theorem}[section]{\bf}{\it}
{\bf}{\it}
{\bf}{\it}
{\bf}{\it}
{\it}{\rm}
\newtheorem{lemma}[theorem]{Lemma}{\bf}{\it}
\newtheorem{remark}[theorem]{Remark}{\it}{\rm}
{\bf}{\it}
{\bf}{\it}
{\bf}{\it}
\newtheorem{lem}{Lemma}{\bf}{\it}
\newtheorem{assumption}{Assumption}
\title[Unstable Bumps in Neural Networks]{On the Existence of Unstable Bumps in Neural Networks}
\author[V. Kostrykin]{Vadim Kostrykin}
\address{V.~Kostrykin, FB 08 - Institut f\"{u}r Mathematik,
Johannes Gutenberg-Universit\"{a}t Mainz,
Staudinger Weg 9,
D-55099 Mainz,
Germany}
\email{kostrykin@mathematik.uni-mainz.de}
\author[A.~Oleynik]{Anna Oleynik}
\address{A.~Oleynik, Department of Mathematics, University of Uppsala, Box 480, S-75106 Uppsala, Sweden}
\email{anna.oleynik@math.uu.se}
\subjclass[2010]{47H07, 47H10, 37C75, 92B20}
\keywords{Neural field equation, fixed point theorems, stability}
\begin{document}

\maketitle

\begin{abstract}
We study the neuronal field equation, a nonlinear integro-differential equation of Hammerstein type.
By means of the Amann three fixed point theorem we prove the existence of bump solutions to this
equation. Using the Krein-Rutman theorem we show their Lyapunov instability.
\end{abstract}

\footnotetext{This work is supported in part by the Deutsche Forschungsgemeinschaft, grant KO 2936/4-1.}

%%%%%%%%%%%%%%%%%%%%%%%%%%%%%%%%%%%%%%%%%%%%%%%%%%%
\section{Introduction and Main Results}
%%%%%%%%%%%%%%%%%%%%%%%%%%%%%%%%%%%%%%%%%%%%%%%%%%%

The behavior of a single layer of neurons can be modeled by a nonlinear integro-differential equation of the Hammerstein type,
\begin{equation}\label{eq:1}
 \partial_t u(x,t) = -u(x,t) + \int_{\R} \omega(x-y) f(u(y,t)-h) dy.
\end{equation}
Here $u(x,t)$ and $f(u(x,t)-h)$ represent the averaged local activity and the firing rate of neurons at the position $x\in \R$ and time $t>0$, respectively. The parameter $h \geq 0$ is a firing threshold, and $\omega(x-y)$ describes a coupling between neurons at positions $x$ and $y$.

The model described above has been studied in numerous mathematical papers (for a review see, e.g., \cite{Coombes}, \cite{Ermentrout}). In particular, the global existence and uniqueness of solutions to the initial value problem for equation \eqref{eq:1} under rather mild assumptions on $f$ and $w$ has been proven in \cite{Potthast:Graben}.

In 1977, Amari studied pattern formation in \eqref{eq:1} for a model where $f$ is the Heaviside function and
$\omega$ is assumed to be continuous, integrable and even, with $\omega(0)>0$ and having exactly one positive zero.
In particular, he showed the existence of stable and unstable bumps, that is, time independent spatially localized solutions to \eqref{eq:1}. For more general $f$ and $\omega$ the existence of stable solutions of this kind has been shown by Kishimoto and Amari in \cite{Kishimoto:Amari} and later generalized by Oleynik, Ponosov, and Wyller in \cite{A}. In the present work we prove the existence of unstable bumps.

%%%%%%%%%%%%%%%%%%%%%%%%%%%%%%%%%%%%%%%%%%%%%%%%%%%
%\section{Main Results}
%%%%%%%%%%%%%%%%%%%%%%%%%%%%%%%%%%%%%%%%%%%%%%%%%%%

Our main assumptions are as follows.

\begin{assumption}\label{ass:f}
 Let $f:\R\to [0,1]$ be an arbitrary continuous nondecreasing function such that $f(x)=0$ for all $x\leq 0$ and $f(x)=1$ for all $x\geq \tau$ with $\tau>0$.
\end{assumption}

In particular, $f$ is a distribution function of a continuous probability measure supported on the interval $[0,\tau]$. As an example of such function we have
\begin{equation} \label{eq:f:ex}
f(u)=\begin{cases} 0, & u\leq 0,\\ \displaystyle\frac{u^p}{u^p + (\tau - u)^p}, & 0<u< \tau,\\ 1, & u\geq \tau, \end{cases}
\end{equation}
with $p>0$ arbitrary. It is straightforward to see that $f\in C^{\lfloor p\rfloor}(\R)$, where $\lfloor p\rfloor$ denotes the integer part of $p$.

\bigskip

\begin{assumption}\label{ass:w}
We assume that  the integral kernel $\omega$ meets the following conditions:
\begin{itemize}
\item[(i)]$\int_{-\infty}^{\infty} |\omega(x)| dx < \infty$, that is,  $\omega \in L^1(\R)$.
\item[(ii)] $\omega$ is bounded and continuous.
\item[(iii)] $\omega$ is a symmetric function, i.e., $\omega(-x)=\omega(x)$.
\item[(iv)] There is an $a>0$ such that $\omega(x)> 0$ for almost all $x\in [0,2a]$.
\item[(v)] For given $h,\tau>0$
\begin{equation*}
\int_0^{2a} \omega(y) dy > h+\tau.
\end{equation*}
\end{itemize}
The conditions (i)-(v) guarantee that there are $0 < \Delta_- < \Delta_+<a$ such that
\begin{equation*}
\int_{0}^{2\Delta_-} \omega(y)dy = h,\qquad \int_{0}^{2\Delta_+} \omega(y)dy = h + \tau.
\end{equation*}
For all $x\in\R$ we define
\begin{equation}\label{eq:upm}
u_\pm(x) := \int_{-\Delta_\pm}^{\Delta_\pm} \omega(x-y) dy.
\end{equation}
\begin{itemize}
\item[(vi)] There is a $d\in(\Delta_+, a]$ such that $u_+(d) = h$.
\item[(vii)] $\omega$ is decreasing on $[0,2d]$ and $\omega(x)\leq \omega(2d)$ for all $x\geq 2d$.
\end{itemize}
\end{assumption}

Let  $\chi_{(\tau,\infty)}$ and $\chi_{(0,\infty)}$ be characteristic functions of $(\tau,\infty)$ and $(0,\infty)$, respectively. Under Assumption \ref{ass:w} it is easy to show that the functions $u_+$ and $u_-$ solve  equation \eqref{eq:1} with $f=\chi_{(\tau,\infty)}$ and $f=\chi_{(0,\infty)}$, respectively. The proof is given in Appendix, see Lemma \ref{lem:app:1}.

Following Amari \cite{Amari} we call a stationary solution of equation \eqref{eq:1} a \emph{bump} (more precisely, 1-bump) if the support of the function $x\mapsto f(u(x)-h)$ is an interval. According to this definition $u_+$ and $u_-$ are bumps provided $f=\chi_{(\tau,\infty)}$ and $f=\chi_{(0,\infty)}$, respectively, see Lemma \ref{lem:app:1} in Appendix.

One of the common choices of $\omega$ in the study of
neural field models is that of a 'Mexican hat' function,
such as
\begin{equation*}
\omega(x) = K\exp(-k x^2) - M\exp(-m x^2),\quad K>M>0,\quad k>m>0,
\end{equation*}
see, e.g., \cite{A}, \cite{Coombes}, \cite{Amari}.
This function satisfies Assumption B for some values of $h$ and $\tau$.  The other common choices of $\omega$ are the exponential function
$\omega(x)=\e^{-|x|}/2$ and the Gaussian function $\omega(x)=\exp(-x^2)$. It is easy to see that the conditions of Assumption B are satisfied for these functions if $h+\tau<1/2$ and $h+\tau<\sqrt{\pi}/2$, respectively.

The condition (ii) of Assumption \ref{ass:w} implies that $u_\pm$ are continuous, whereas from the conditions (iii) and (iv) the inequality $u_-(x)<u_+(x)$ for all $x\in [-d,d]$ follows.

\begin{lemma}\label{lem:blin}
The condition (vii) in Assumption \ref{ass:w} is fulfilled if and only if
\begin{equation}\label{eq:cond}
 \omega(x-y) \leq \omega(d-y)\quad\text{for all}\quad x>d\quad\text{and}\quad y\in [-d,d].
\end{equation}
\end{lemma}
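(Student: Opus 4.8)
The plan is to prove the two implications separately, exploiting the monotonicity hypotheses in~(vii) and in~\eqref{eq:cond} together with the fact that $2d \le 2a$ lies in the region where $\omega$ is controlled.

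\textbf{From (vii) to \eqref{eq:cond}.} Fix $x>d$ and $y\in[-d,d]$, and set $s=x-y$ and $t=d-y$. Since $y\le d$ we have $s>t$, and since $y\ge -d$ we have $t=d-y\in[0,2d]$. By symmetry $\omega(d-y)=\omega(|d-y|)=\omega(t)$ and $\omega(x-y)=\omega(|s|)$. I would split into cases according to the size of $s$ relative to $2d$. If $0\le s\le 2d$, then $t<s$ both lie in $[0,2d]$, and the claimed inequality $\omega(s)\le\omega(t)$ is exactly the statement that $\omega$ is decreasing on $[0,2d]$. If $s>2d$, then the second half of~(vii) gives $\omega(s)\le\omega(2d)$, and since $t\le 2d$ the monotonicity on $[0,2d]$ gives $\omega(2d)\le\omega(t)$, so the chain closes. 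The only slightly delicate point is when $s<0$, i.e. $x-y<0$; but $x>d\ge 0$ forces $y>x\ge d$, which is incompatible with $y\le d$ unless $y=d=x$, a boundary case one handles trivially (both sides equal $\omega(0)$, or one simply notes $x>d$ is strict). Hence \eqref{eq:cond} holds.

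\textbf{From \eqref{eq:cond} to (vii).} First I would establish that $\omega$ is nonincreasing on $[0,2d]$. Given $0\le s_1<s_2\le 2d$, I want to realize $s_2$ as $x-y$ and $s_1$ as $d-y$ for admissible $x>d$, $y\in[-d,d]$: take $y=d-s_1$ (so $y\in[d-2d,d]=[-d,d]$) and $x=y+s_2=d-s_1+s_2$; then $x>d$ since $s_2>s_1$, and \eqref{eq:cond} yields $\omega(s_2)=\omega(x-y)\le\omega(d-y)=\omega(s_1)$. This gives monotonicity on $[0,2d]$ (and, by continuity from Assumption~\ref{ass:w}(ii), it is genuinely decreasing in the non-strict sense; if one wants strict decrease one uses~(iv), but the statement of~(vii) as written only needs ``decreasing'' in the weak sense consistent with the rest of the paper). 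Next, for the tail estimate: given $x\ge 2d$, apply \eqref{eq:cond} with $y=-d$, which is admissible, to get $\omega(x+d)\le\omega(2d)$; equivalently $\omega(z)\le\omega(2d)$ for all $z\ge 3d$. To cover the remaining window $z\in[2d,3d]$ I instead choose, for a given such $z$, the value $y=d-(z-2d)=3d-z\in[0,d]\subset[-d,d]$ together with $x=y+z$; then $x-y=z$ while $d-y=d-(3d-z)=z-2d\ge 0$, so $d-y\le d\le 2d$ and, combining \eqref{eq:cond} with the already-proved monotonicity on $[0,2d]$, $\omega(z)=\omega(x-y)\le\omega(d-y)=\omega(z-2d)\le\omega(0)$. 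This last bound is by $\omega(0)$, not $\omega(2d)$, so I would instead argue more directly: for $z\in[2d,3d]$ pick $y=-d$ and $x=z-d\ge d$; if $z>2d$ then $x>d$ and \eqref{eq:cond} gives $\omega(z)=\omega(x-y)\le\omega(d-y)=\omega(2d)$, while $z=2d$ is the equality case. Thus $\omega(x)\le\omega(2d)$ for all $x\ge 2d$, and~(vii) follows.

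\textbf{Main obstacle.} The substantive work is entirely in the second direction, and the only real subtlety is the reparametrization: one must check that the pairs $(x,y)$ chosen to encode a given inequality actually satisfy the constraints $x>d$ and $y\in[-d,d]$, and in particular handle the boundary cases ($s_1=0$, $z=2d$, or $x=d$) where the strict inequality $x>d$ degenerates; there continuity of $\omega$ closes the gap. I expect no analytic difficulty beyond this bookkeeping, since both hypotheses are purely pointwise monotonicity statements about the single-variable function $\omega$ restricted to $[0,\infty)$ via its symmetry.
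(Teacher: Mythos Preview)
Your proof is correct and follows essentially the same route as the paper: both directions rest on the reparametrization $t=d-y$ (the paper's $\xi$) together with a case split on whether $x-y$ lies in $[0,2d]$ or exceeds $2d$. Your tail-estimate passage is unnecessarily circuitous, however---the final choice $y=-d$, $x=z-d$ already handles every $z>2d$ at once, so the preceding detour through ``$z\ge 3d$'' and the separate window $z\in[2d,3d]$ can simply be deleted.
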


\begin{proof}
Assume that the condition \eqref{eq:cond} is fulfilled. We introduce $\xi=d-y.$ Then we have
\begin{equation}\label{eq:star}
 \omega(\xi + (x-d)) \leq \omega(\xi), \quad \xi \in [0,2d].
\end{equation}
Since $x-d>0$ the inequality \eqref{eq:star} implies the monotonicity of $\omega$ on $[0, 2d]$. Next, we set $\xi=2d$ in \eqref{eq:star}, thus obtaining
\begin{equation*}
 \omega(\eta) \leq \omega(2d)\quad\text{with}\quad \eta = x+d \geq 2d.
\end{equation*}

Conversely, assume that the condition (vii) is satisfied. Let $x\geq d$ and $y\in [0,2d]$ be arbitrary. If $x-y\in [0,2d]$, then the inequality $\omega(x-y) \leq \omega(d-y)$
follows from the monotonicity of $\omega$ and $x-y\geq d-y$. If $x-y>d$ we then obtain
\begin{equation*}
 \omega(x-y) \leq \omega(2d) \leq \omega(d-y).
\end{equation*}
\end{proof}

%They exploited a lateral-inhibitory coupling, that is, the integral kernel $w$ is assumed to be even with $w(0)>0$ and precisely one positive zero.

Our main results are as follows:

\begin{introtheorem}\label{introthm:1}
Under Assumptions \ref{ass:f} and \ref{ass:w}
there exists a bump solution $\widetilde{u}$ to the integro-diffe\-ren\-tial equation \eqref{eq:1}, that is, a stationary solution with $\supp f(\widetilde{u}(\cdot)-h)$ an interval.
Moreover,
\begin{equation}\label{ineq:pm}
u_-(x) \leq \widetilde{u}(x) \leq u_+(x)
\end{equation}
holds for all $x\in [-d,d]$ and, hence, the support of $f(\widetilde{u}(\cdot)-h)$ is contained in $[-d,d]$.
\end{introtheorem}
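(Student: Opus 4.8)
The plan is to reformulate the problem as a fixed point equation for the stationary version of \eqref{eq:1}, namely $u = T u$ where $(Tu)(x) := \int_\R \omega(x-y) f(u(y)-h)\, dy$, and to apply the Amann three fixed point theorem in an ordered Banach space. The natural choice is $E = C_b(\R)$ (or $BUC(\R)$) with its usual order cone, and the relevant order interval is $[u_-, u_+] = \{v \in E : u_-(x) \le v(x) \le u_+(x) \text{ for all } x\}$. The first step is to verify that $T$ maps this order interval into itself: since $f$ is nondecreasing with values in $[0,1]$, and $\chi_{(\tau,\infty)} \le f(\cdot) \le \chi_{(0,\infty)}$ pointwise when evaluated on $v(\cdot) - h$ with $v$ between $u_-$ and $u_+$ (one must be careful here — this uses that on the relevant region $u_\pm$ cross the threshold levels $h$ and $h+\tau$ exactly once, which is where conditions (iv)–(vii) enter), the monotonicity of $T$ together with the identities $T[\chi_{(\tau,\infty)}](u_+ - h) = u_+$ and $T[\chi_{(0,\infty)}](u_- - h) = u_-$ from Lemma~\ref{lem:app:1} pins the image inside $[u_-, u_+]$. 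Continuity and compactness of $T$ on bounded sets follow from $\omega \in L^1$, boundedness/continuity of $\omega$, and the fact that $f$ is uniformly continuous with bounded range, giving equicontinuity via an Arzel\`a--Ascoli argument on compact $x$-intervals combined with the decay of $\omega$.

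**The core of the argument** is to produce \emph{three} ordered fixed points, or rather to set up the hypotheses of Amann's theorem so that a fixed point in the \emph{interior} order interval (distinct from $u_\pm$) is forced. The two known sub/supersolutions are built from $u_-$ and $u_+$: one checks that $u_-$ is a strict subsolution and $u_+$ a strict supersolution of the stationary equation, or more precisely identifies a smaller order subinterval $[\underline v, \overline v] \subset (u_-, u_+)$ whose endpoints are strict sub- and supersolutions. Amari's explicit construction suggests the right candidates: the functions obtained by thresholding at levels strictly between the degenerate ones, exploiting condition (vi) ($u_+(d) = h$) to control the support and keep everything inside $[-d,d]$. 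Then Amann's three fixed point theorem yields a fixed point $\widetilde u$ strictly between these, which by construction will have $\supp f(\widetilde u(\cdot) - h)$ a genuine interval (not a point, not the whole region), i.e., a true bump.

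**The main obstacle** I anticipate is verifying that the fixed point produced is actually a \emph{bump} in the sense required — that $\supp f(\widetilde u(\cdot)-h)$ is exactly an interval — and that it is genuinely distinct from the trivial/degenerate solutions. The fixed point equation alone gives $u_-(x) \le \widetilde u(x) \le u_+(x)$ on $[-d,d]$, hence $f(\widetilde u(x)-h) = 0$ for $|x| > d$ (since $\widetilde u(x) \le u_+(x) \le u_+(d) = h$ there, using monotonicity of $u_+$ which comes from condition (vii) via Lemma~\ref{lem:blin}), so the support sits inside $[-d,d]$; the harder part is excluding a disconnected support, which should follow from the unimodality/monotonicity of $u_\pm$ and of $\widetilde u$ itself — one argues that $\widetilde u$ inherits symmetry and monotone-decay structure from $\omega$ (conditions (iii), (vii)), so the superlevel set $\{x : \widetilde u(x) > h\}$ is an interval centered at $0$. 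A secondary technical point is the choice of function space and order cone so that Amann's theorem applies cleanly: the cone of nonnegative functions in $C_b(\R)$ is not solid in the norm topology, so one may need to work in $BUC(\R)$ or invoke the version of the theorem that only requires the cone to be normal and the order interval to be bounded, closed, convex with $T$ compact — this is standard but must be stated carefully.
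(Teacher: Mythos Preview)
Your overall strategy is right (Amann's three fixed point theorem applied to the Hammerstein operator), but the key structural point is inverted, and this is not a cosmetic error---it breaks the first step of your plan.

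You claim that $T$ maps the order interval $[u_-,u_+]$ into itself, using $\chi_{(\tau,\infty)}\le f\le \chi_{(0,\infty)}$ and the identities for $u_\pm$. But run the inequalities: since $f\le \chi_{(0,\infty)}$ and $\omega>0$ on the relevant region,
\[
(Tu_-)(x)=\int \omega(x-y)\,f(u_-(y)-h)\,dy \;<\; \int \omega(x-y)\,\chi_{(0,\infty)}(u_-(y)-h)\,dy=u_-(x),
\]
and likewise $f\ge \chi_{(\tau,\infty)}$ gives $Tu_+>u_+$. So $u_-$ is a strict \emph{super}solution and $u_+$ a strict \emph{sub}solution---the opposite of what you wrote---and consequently $T[u_-,u_+]\subset[Tu_-,Tu_+]\supsetneq[u_-,u_+]$. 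The interval is \emph{not} $T$-invariant, and the usual monotone-iteration picture (sub below, super above, stable fixed point in between) does not apply. This inversion is in fact the whole reason the resulting bump is expected to be \emph{unstable} (Theorem~\ref{introthm:2}).

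The paper deals with this by two devices you are missing. First, it works on $C([-d,d])$ rather than $C_b(\R)$ or $BUC(\R)$: this makes the cone solid and compactness of the linear integral operator immediate, sidestepping the issues you flag in your last paragraph. Second, and more importantly, since $T$ does not preserve $[u_-,u_+]$, the paper replaces $T$ by the clipped operator
\[
(\widehat{T}u)(x):=\max\bigl\{\min\{(Tu)(x),u_+(x)\},\,u_-(x)\bigr\},
\]
which by construction is a self-map of $[u_-,u_+]$ with $\widehat{T}u_\pm=u_\pm$. One then checks that $\widehat{T}$ inherits monotonicity and compactness, and that for small $\epsilon>0$ one has $\widehat{T}(u_-+\epsilon)\ll u_-+\epsilon$ and $\widehat{T}(u_+-\epsilon)\gg u_+-\epsilon$. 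Amann's theorem is applied to $\widehat{T}$ with the four points $u_-,\,u_-+\epsilon,\,u_+-\epsilon,\,u_+$, yielding a third fixed point $u_\ast$ of $\widehat{T}$; since $u_-\le u_\ast\le u_+$ and $\widehat{T}$ agrees with $T$ wherever $Tu$ already lies in $[u_-,u_+]$, $u_\ast$ is a fixed point of $T$ as well. Finally one extends $u_\ast$ from $[-d,d]$ to $\R$ by the integral formula and uses condition~(vii) via Lemma~\ref{lem:blin} to show $\widetilde{u}(x)\le h$ for $|x|>d$. Your proposal has the right ingredients around the edges, but without the clipping step (or an equivalent trick) the Amann scheme cannot be set up.
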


\begin{introtheorem}\label{introthm:2}
Assume in addition to Assumptions \ref{ass:f} and \ref{ass:w} that
\begin{itemize}
\item[(i)] $\omega\in W^{1,\infty}(\R)$, the Sobolev space of almost everywhere differentiable functions with essentially bounded derivative,
\item[(ii)] $\omega(x)\to 0$ as $|x|\to\infty$,
\item[(iii)] $f\in C^{1,\mu}(\R)$, that is, $f$ is continuously differentiable and its derivative is H\"{o}lder continuous with an exponent $\mu\in(0,1]$, $|f'(x)-f'(y)|\leq C |x-y|^\mu$.
\end{itemize}
Then the solution $\widetilde{u}$ referred to in Theorem \ref{introthm:1} belongs to $C_\infty(\R)$. It is a Lyapunov-unstable equilibrium of the integro-differential equation \eqref{eq:1}, that is, for all sufficiently small $\epsilon>0$ there is an initial value in the ball $B_\epsilon(\widetilde{u})\subset C_\infty(\R)$ such that the corresponding solution to \eqref{eq:1} leaves $B_\epsilon(\widetilde{u})$ in finite time.
\end{introtheorem}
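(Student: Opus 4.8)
The plan is to linearize \eqref{eq:1} about $\widetilde u$ and to show, via the Krein--Rutman theorem, that the linearization carries a spectral point in the open right half-plane; Lyapunov instability then follows from the principle of linearized instability.

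\emph{Step 1: regularity and a translation-invariance identity.} I would first write $\phi:=f(\widetilde u(\cdot)-h)$, so that $\widetilde u=\omega*\phi$ with $\phi$ bounded and, by Theorem~\ref{introthm:1}, $\supp\phi\subseteq[-d,d]$. Since $\phi$ is bounded with compact support and $\omega(x)\to0$ as $|x|\to\infty$, the convolution $\omega*\phi$ is bounded, continuous and vanishes at infinity, so $\widetilde u\in C_\infty(\R)$. Because $\omega\in W^{1,\infty}(\R)$, differentiation under the integral gives $\widetilde u\in C^1(\R)$ with $\widetilde u'=\omega'*\phi$; since $f\in C^1$, this forces $\phi\in C^1_c(\R)$ with $\phi'=g\,\widetilde u'$, where $g:=f'(\widetilde u(\cdot)-h)$ is nonnegative, continuous and supported in $[-d,d]$. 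Evaluating $(\omega*\phi)'$ the other way then gives
\begin{equation}\label{eq:eigenrel}
\widetilde u'=\omega*\phi'=\omega*(g\,\widetilde u'),
\end{equation}
so that $\widetilde u'$ is a $1$-eigenfunction of the bounded operator $K$ on $C_\infty(\R)$ defined by $Kv:=\omega*(gv)$. Since $\widetilde u$ is a nondegenerate bump, $\phi\not\equiv0$, hence $g\not\equiv0$ and $\phi'=g\,\widetilde u'\not\equiv0$.

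\emph{Step 2: linearization.} On $X=C_\infty(\R)$ I would set $F(u)=-u+\omega*f(u-h)$; the assumption $f\in C^{1,\mu}$ guarantees that $F\in C^1(X,X)$, that $F(\widetilde u)=0$, and that $DF(\widetilde u)=-I+K$. As $g$ has compact support, $\omega\in L^1\cap L^\infty$ is uniformly continuous (being Lipschitz), and $\omega(x)\to0$, the operator $K$ is compact on $X$ (the set $\{Kv:\|v\|\le1\}$ is uniformly bounded, equicontinuous and uniformly vanishing at infinity, so Arzel\`{a}--Ascoli applies in $C_\infty(\R)$). Hence $\sigma(DF(\widetilde u))=-1+\sigma(K)$, its part in the open right half-plane is a finite set of isolated eigenvalues of finite multiplicity, and by the principle of linearized instability it will suffice to exhibit an eigenvalue $\mu$ of $K$ with $\mu>1$.

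\emph{Step 3: Krein--Rutman.} I would symmetrize: on $L^2(\R)$ define $\widetilde Kv:=\sqrt g\,\bigl(\omega*(\sqrt g\,v)\bigr)$, a self-adjoint Hilbert--Schmidt operator with kernel $\sqrt{g(x)}\,\omega(x-y)\,\sqrt{g(y)}$ (symmetric, as $\omega$ is even), whose nonzero eigenvalues coincide with those of $K$ under the correspondence $v\leftrightarrow\sqrt g\,v$. Since $\supp g\subseteq[-d,d]$ and $d\le a$, for $x,y\in\supp g$ one has $x-y\in[-2d,2d]\subseteq[-2a,2a]$, so $\omega(x-y)\ge0$ and, by Assumption~\ref{ass:w}(iv), $\omega(x-y)>0$ for almost every such pair; hence the kernel of $\widetilde K$ is nonnegative and almost everywhere strictly positive on $\{g>0\}\times\{g>0\}$, so $\widetilde K$ is positive, irreducible and compact. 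By the Krein--Rutman (Jentzsch) theorem its spectral radius $\lambda_1:=\max\sigma(\widetilde K)$ is strictly positive, is a simple eigenvalue with an almost everywhere positive eigenfunction, and is the only eigenvalue of $\widetilde K$ admitting a sign-definite eigenfunction. Identity \eqref{eq:eigenrel} transfers to $\widetilde K w^*=w^*$ with $w^*:=\sqrt g\,\widetilde u'\ne0$, so $1\in\sigma(\widetilde K)$ and therefore $1\le\lambda_1$; but $w^*$ is not sign-definite, since $\phi$ is a nonzero, nonnegative, continuous function vanishing at $\pm\infty$, hence rises from $0$ to a positive maximum and returns to $0$, so $\phi'$ --- and with it $w^*$, because $\sqrt g>0$ on $\{g>0\}$ --- takes both signs. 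Hence $1\ne\lambda_1$, so $\lambda_1>1$; thus $\lambda_1-1$ is a positive eigenvalue of $DF(\widetilde u)$, and, by linearized instability, for every sufficiently small $\epsilon>0$ a suitable perturbation of $\widetilde u$ along the unstable spectral subspace of $DF(\widetilde u)$ leaves $B_\epsilon(\widetilde u)\subset C_\infty(\R)$ in finite time.

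\emph{Main obstacle.} The step I expect to require most care is the rigorous passage from spectral instability of $DF(\widetilde u)$ to genuine Lyapunov instability of \eqref{eq:1} in the non-reflexive space $C_\infty(\R)$: one must verify that the Hammerstein vector field $F$ is continuously Fr\'{e}chet differentiable on $C_\infty(\R)$ --- which is exactly where the assumption $f\in C^{1,\mu}$ is needed, ensuring that the Nemytskii operator $u\mapsto f(u-h)$ is of class $C^1$ from $C_\infty(\R)$ to itself --- and then run an unstable-manifold/Lyapunov-functional argument compatible with the neutral direction $\widetilde u'\in\ker DF(\widetilde u)$ forced by translation invariance, invoking global well-posedness of the flow. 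Secondary points are the nontriviality of the bump (so that $g\not\equiv0$) and the almost-everywhere positivity of $\omega$ on $[-2d,2d]$, which is what makes $\widetilde K$ irreducible.
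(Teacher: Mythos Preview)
Your plan is correct and follows the same overall strategy as the paper: exhibit $\widetilde u'$ as a $1$-eigenfunction of the linearization via translation invariance, use a Krein--Rutman/Perron--Frobenius argument to force the spectral radius strictly above $1$ because the $1$-eigenfunction changes sign, and conclude via linearized instability. The technical implementations differ. Where you symmetrize to the self-adjoint Hilbert--Schmidt operator $\widetilde K$ on $L^2$ and invoke Jentzsch to get simplicity and sign-uniqueness of the top eigenfunction, the paper instead restricts to $C([-d,d])$ --- where the cone of nonnegative functions has nonempty interior --- and applies the Krein--Rutman theorem directly to $T'(u_\ast)$; the sign-change step is then just the observation that $u_\ast(\pm d)\le h<u_\ast(0)$, so $u_\ast$ is not monotone, and afterwards the paper checks that $T'(u_\ast)$ and $\widetilde T'(\widetilde u)$ share their nonzero eigenvalues. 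Your symmetrization buys you simplicity of $\lambda_1$ (which is not needed) at the cost of some extra bookkeeping; the paper's route is shorter because a solid cone is available on the compact interval. Finally, your ``main obstacle'' is slightly misdiagnosed: the neutral direction $\widetilde u'$ causes no difficulty, since the instability principle the paper invokes (Dalecki\u{\i}--Kre\u{\i}n) only requires a single spectral point with positive real part, a bounded generator, and the remainder bound $\|Fv\|\le C\|v\|^{1+\mu}$ --- and it is for \emph{this} remainder estimate, not merely for $C^1$ Fr\'echet differentiability of the Nemytskii map, that the H\"older condition on $f'$ is used.
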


Here $C_\infty(\R)$ denotes the Banach space of continuous functions vanishing at infinity.

It is straightforward to see that the conditions of Theorem \ref{introthm:2}are fulfilled for all three above examples of $\omega$ and for $f$ in \eqref{eq:f:ex} with $p>1.$
%%%%%%%%%%%%%%%%%%%%%%%%%%%%%%%%%%%%%%%%%%%%%%%%%%%%%%%%%%%%%%%%%%%%%%%%%%%%%%%%%%%%%%%%%%%%%%%%%%%%%%%%%%%%%%%%%%%%%%%
\subsection*{Acknowledgments} The authors thank H.-P.~Heinz for useful remarks. A.O.~is grateful to the Institute for Mathematics for its kind hospitality during her stay at the Johannes Gutenberg-Universit\"{a}t Mainz. Her work has been supported in part by the Deutsche Forschungsgemeinschaft, grant KO 2936/4-1, and by the Inneruniversit\"{a}ren For\-schungsf\"{o}rderung of the Johannes Gutenberg-Universit\"{a}t Mainz.
%%%%%%%%%%%%%%%%%%%%%%%%%%%%%%%%%%%%%%%%%%%%%%%%%%%%%%%%%%%%%%%%%%%%%%%%%%%%%%%%%%%%%%%%%%%%%%%%%%%%%%%%%%%%%%%%%%%%%%

%%%%%%%%%%%%%%%%%%%%%%%%%%%%%%%%%%%%%%%%%%%%%%%
\section{Proof of Theorem \ref{introthm:1}}
%%%%%%%%%%%%%%%%%%%%%%%%%%%%%%%%%%%%%%%%%%%%%%%

In this section we treat $u_\pm$ defined in \eqref{eq:upm} as functions on $[-d,d]$. We define a nonlinear integral operator
\begin{equation}\label{eq:T}
(Tu)(x) := \int_{-d}^{d} \omega(x-y) f(u(y)-h) dy
\end{equation}
and consider the fixed point problem
\begin{equation*}
u=T u
\end{equation*}
in the real Banach space $C([-d,d])$. The cone
\begin{equation}\label{eq:K}
K:=\{u\in C([-d,d])\, :\, u(x)\geq 0\quad\text{for all}\quad x\in[-d,d]\}
\end{equation}
defines a partial order in $C([-d,d])$. We write $u\geq v$ if $u-v\in K$, $u>v$ if $u\geq v$ and $u\neq v$, and $u\gg v$ if $u-v$ is in the interior of $K$.

\begin{lemma}
Under Assumptions \ref{ass:f} and \ref{ass:w} the operator $T:\, C([-d,d]) \to C([-d,d])$ is monotone increasing and compact. Moreover,
$T u_- \ll u_-$ and $T u_+ \gg u_+$.
\end{lemma}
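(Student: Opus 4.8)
The lemma has three separate assertions about the operator $T$ defined in \eqref{eq:T}: monotonicity, compactness, and the two strict inequalities $Tu_- \ll u_-$ and $Tu_+ \gg u_+$. I would handle each in turn. For monotonicity: if $u \geq v$ pointwise on $[-d,d]$, then since $f$ is nondecreasing (Assumption \ref{ass:f}) we have $f(u(y)-h) \geq f(v(y)-h)$ for all $y$; because $\omega$ is positive on $[0,2a] \supset [0,2d]$ (Assumption \ref{ass:w}(iv), using $d \leq a$) and symmetric, for $x,y \in [-d,d]$ the kernel $\omega(x-y)$ is nonnegative, so integrating the inequality $f(u(y)-h) - f(v(y)-h) \geq 0$ against $\omega(x-y)\,dy$ gives $(Tu)(x) \geq (Tv)(x)$.

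**Compactness.** Here I would check that $T$ maps bounded sets to relatively compact sets in $C([-d,d])$ via Arzelà–Ascoli. Uniform boundedness is immediate since $0 \leq f \leq 1$, so $|(Tu)(x)| \leq \int_{-d}^d |\omega(x-y)|\,dy \leq \|\omega\|_{L^1}$. For equicontinuity, write $(Tu)(x_1) - (Tu)(x_2) = \int_{-d}^d (\omega(x_1-y) - \omega(x_2-y)) f(u(y)-h)\,dy$ and bound this by $\int_{-d}^d |\omega(x_1-y) - \omega(x_2-y)|\,dy$, which tends to $0$ as $x_1 \to x_2$ uniformly in $u$ by continuity of translation in $L^1$ (or directly, since $\omega$ is bounded and continuous, hence uniformly continuous on compact sets, giving equicontinuity with no reference to $u$ at all). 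Continuity of $T$ itself follows similarly from dominated convergence and continuity of $f$.

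**The strict inequalities.** This is the substantive part. By Lemma \ref{lem:app:1} in the Appendix (which I may assume), $u_+$ solves \eqref{eq:1} with $f = \chi_{(\tau,\infty)}$ and $u_-$ with $f = \chi_{(0,\infty)}$; equivalently, restricted to $[-d,d]$, $u_+(x) = \int_{-d}^d \omega(x-y)\,\chi_{(\tau,\infty)}(u_+(y)-h)\,dy$ and similarly for $u_-$ — here I need that the support of the relevant firing-rate function lies in $[-\Delta_\pm,\Delta_\pm] \subset [-d,d]$, which is exactly how $\Delta_\pm$ and $d$ were chosen. Now compare $f$ from Assumption \ref{ass:f} with the two Heaviside-type functions: since $f(x) = 0$ for $x \leq 0$ and $f(x) \leq 1$ with $f(x) < 1$ for $x < \tau$, we have $\chi_{(\tau,\infty)}(x) \leq f(x) \leq \chi_{(0,\infty)}(x)$ pointwise, with the left inequality \emph{strict} on $(0,\tau)$ and the right inequality strict on $(0,\tau)$ as well (more precisely $f(x) < 1 = \chi_{(0,\infty)}(x)$ there, and $f(x) > 0 = \chi_{(\tau,\infty)}(x)$ there). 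Applying $T$-type integration: $(Tu_+)(x) = \int_{-d}^d \omega(x-y) f(u_+(y)-h)\,dy \geq \int_{-d}^d \omega(x-y)\chi_{(\tau,\infty)}(u_+(y)-h)\,dy = u_+(x)$, and the inequality is strict because on the set where $0 < u_+(y) - h < \tau$ — a nonempty open set, since $u_+$ is continuous, $u_+(\Delta_+) = h + \tau$ wait, rather $u_+$ ranges continuously and exceeds $h$ on $(-\Delta_+,\Delta_+)$ with $u_+(d) = h$ — the integrand is strictly positive where $\omega(x-y) > 0$. One must verify this set has positive measure and that $\omega(x-y) > 0$ there for every $x \in [-d,d]$; the latter uses $\omega > 0$ on $[0,2d]$ again. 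Strict positivity of the difference $(Tu_+ - u_+)(x)$ for \emph{every} $x \in [-d,d]$ — including the endpoints — is what upgrades $>$ to $\gg$ (interior of the cone $K$). The argument for $Tu_- \ll u_-$ is the mirror image, using $f(x) < 1$ for $x < \tau$ together with the facts that $u_-(y) - h < \tau$ wherever $u_-(y) > h$ (since $u_-(\Delta_-) = h$... again one uses $u_- < u_+ \le u_+(\ldots)$ and the bound $\int_0^{2\Delta_+}\omega = h+\tau$) and that the discrepancy set has positive measure.

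**Main obstacle.** The monotonicity and compactness are routine. The delicate point is establishing the \emph{strict} inequalities uniformly in $x \in [-d,d]$ — i.e. that $u_+ - Tu_+$ and $Tu_- - u_-$ are strictly positive \emph{everywhere} on the closed interval, which is the precise meaning of belonging to the interior of $K$. This requires (a) knowing the set $\{y : 0 < u_+(y) - h < \tau\}$ (resp.\ $\{y : 0 < u_-(y) - h < \tau\}$) has positive Lebesgue measure — which follows from continuity of $u_\pm$ and the strict inequalities $\Delta_- < \Delta_+$ and the normalization integrals, so that $u_+$ is not identically at one of the levels $h$, $h+\tau$ on any subinterval — and (b) the positivity $\omega(x-y) > 0$ for all $x \in [-d,d]$ and $y$ in that set, which holds because $|x-y| \leq 2d \leq 2a$ and $\omega > 0$ a.e. on $[0,2a]$. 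Care is needed at $x = \pm d$, where one invokes $u_+(d) = h$ from Assumption \ref{ass:w}(vi) to see that even there the discrepancy does not collapse.
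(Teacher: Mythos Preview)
Your proposal is correct and follows essentially the same approach as the paper: factor $T$ as a positive linear integral operator composed with the bounded monotone Nemytskii map $u\mapsto f(u-h)$ for monotonicity and compactness, and compare $f$ with $\chi_{(0,\infty)}$ and $\chi_{(\tau,\infty)}$ against the strictly positive kernel to obtain $Tu_-\ll u_-$ and $Tu_+\gg u_+$. The paper's proof is terser (it simply invokes compactness of the linear integral operator rather than running Arzel\`a--Ascoli explicitly, and it does not spell out why the strict inequality holds for \emph{every} $x$), whereas you correctly identify and address that uniformity issue; one small overstatement in your write-up is that the strict inequalities $\chi_{(\tau,\infty)}<f<\chi_{(0,\infty)}$ need not hold on all of $(0,\tau)$ but only on some open subinterval, which still suffices since $u_\pm$ are continuous and pass through the relevant range near $y=\Delta_\pm$.
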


Recall that an operator $T$ acting on the ordered Banach space $X$ is called monotone increasing if $u\leq v$ implies $Tu\leq Tv$.

\begin{proof}
The linear integral operator
\begin{equation*}
u \mapsto \int_{-d}^d \omega(\cdot-y) u(y) dy
\end{equation*}
is continuous and compact as a mapping in $C([-d,d])$. Since the integral kernel $\omega(x-y)$ is positive for all $x,y\in[-d,d]$, it is monotone increasing.
The mapping $u\mapsto f(u-h)$ is continuous, monotone increasing, and bounded. This implies that $T$ is compact and monotone increasing.
Since $f(t)< \chi_{(0,\infty)}(t)$ on a set of positive measure, we obtain
\begin{equation*}
\begin{split}
(T u_-)(x) &= \int_{-d}^{d} \omega(x-y) f(u_-(y)-h) dy  <
\int_{-d}^{d} \omega(x-y) \chi_{(0,\infty)}(u_-(y)-h) dy\\ & = \int_{-\Delta_-}^{\Delta_-} \omega(x-y) dy = u_-(x),
\end{split}
\end{equation*}
which proves the first inequality. Similarly, the inequality $f(t)>\chi_{(\tau,\infty)}(t)$ holds on a set of positive measure.
Therefore,
\begin{equation*}
\begin{split}
(T u_+)(x) & = \int_{-d}^{d} \omega(x-y) f(u_+(y)-h) dy  >
\int_{-d}^{d} \omega(x-y) \chi_{(\tau,\infty)}(u_+(y)-h) dy\\ & = \int_{-\Delta_+}^{\Delta_+} \omega(x-y) dy = u_+(x),
\end{split}
\end{equation*}
which proves the second inequality.
\end{proof}

For any $u$ in the order interval $[u_-,u_+]:=\{u\in C([-d,d])\,:\, u_-\leq u\leq u_+\}$ we define the mapping
\begin{equation}\label{eq:T:hut}
(\widehat{T}u)(x) := \max\left\{\min\{(Tu)(x), u_+(x)\}, u_-(x) \right\},\qquad x\in[-d,d],
\end{equation}
or, more explicitly,
\begin{equation*}
(\widehat{T}u)(x) = \begin{cases} u_-(x) & \text{if}\quad (Tu)(x) \leq u_-(x), \\
(Tu)(x) & \text{if}\quad u_-(x) \leq (Tu)(x) \leq u_+(x),\\
u_+(x) & \text{if}\quad u_+(x) \leq (Tu)(x).\end{cases}
\end{equation*}
Since the r.h.s.\ in this definition is a continuous function satisfying $$u_-(x)\leq \max\left\{\min\{(Tu)(x), u_+(x)\}, u_-(x) \right\} \leq u_+(x)$$ for all $x\in[-d,d]$, $\widehat{T}$ is a self-mapping of $[u_-,u_+]$. Furthermore, $u_\pm$ are fixed points,
\begin{equation*}
\widehat{T} u_\pm = u_\pm.
\end{equation*}

\begin{lemma}\label{lem:epsilon}
The operator $\widehat{T}$ is monotone increasing and compact. Moreover, for sufficiently small $\epsilon>0$ one has
\begin{equation*}
\widehat{T}(u_-+\epsilon) \ll u_- +\epsilon \ll u_+ - \epsilon
\end{equation*}
and
\begin{equation*}
\widehat{T}(u_+-\epsilon) \gg u_+ -\epsilon \gg u_- + \epsilon.
\end{equation*}
\end{lemma}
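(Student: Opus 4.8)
The plan is to deal with monotonicity and compactness first, since they are formal, and then to concentrate on the four strict order relations. For monotonicity, write $\widehat{T}=P\circ T$, where $P\colon C([-d,d])\to C([-d,d])$ is the pointwise truncation $P(v):=\max\{\min\{v,u_+\},u_-\}$. Since $\min$ and $\max$ are monotone in each of their two entries, $P$ is monotone increasing; as $T$ is monotone increasing by the preceding lemma, so is $\widehat{T}$. For compactness, note that $P$ is $1$-Lipschitz for the sup-norm (truncating a real number against two fixed bounds is a $1$-Lipschitz operation), hence continuous, while $T$ is compact and therefore carries the bounded set $[u_-,u_+]$ into a relatively compact set; consequently $\widehat{T}([u_-,u_+])=P\bigl(T([u_-,u_+])\bigr)$ is relatively compact, so $\widehat{T}$ is compact.

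The relations $u_-+\epsilon\ll u_+-\epsilon$ and $u_+-\epsilon\gg u_-+\epsilon$ are one and the same statement. The function $u_+-u_-$ is continuous and, as already recorded after Assumption \ref{ass:w}, strictly positive on the compact interval $[-d,d]$, so $m:=\min_{[-d,d]}(u_+-u_-)>0$; for $0<\epsilon<m/2$ one has $(u_+-\epsilon)-(u_-+\epsilon)=(u_+-u_-)-2\epsilon\geq m-2\epsilon>0$ uniformly on $[-d,d]$, i.e.\ this difference lies in the interior of $K$. For such $\epsilon$ one also has $u_-+\epsilon\in[u_-,u_+]$ and $u_+-\epsilon\in[u_-,u_+]$, so that $\widehat{T}$ is defined on these functions.

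It then remains to establish $\widehat{T}(u_-+\epsilon)\ll u_-+\epsilon$ and, symmetrically, $\widehat{T}(u_+-\epsilon)\gg u_+-\epsilon$. By the preceding lemma $Tu_-\ll u_-$, and since $[-d,d]$ is compact there is $\delta>0$ with $u_-(x)-(Tu_-)(x)\geq\delta$ for all $x\in[-d,d]$. The key estimate is that $T(u_-+\epsilon)$ stays uniformly close to $Tu_-$: because $f$ is continuous and eventually constant (Assumption \ref{ass:f}) it is uniformly continuous on $\R$, so $\rho(\epsilon):=\sup_{t\in\R}|f(t+\epsilon)-f(t)|\to 0$ as $\epsilon\to 0^+$, whence
\[
|(T(u_-+\epsilon))(x)-(Tu_-)(x)|\leq\int_{-d}^{d}|\omega(x-y)|\,|f(u_-(y)+\epsilon-h)-f(u_-(y)-h)|\,dy\leq\|\omega\|_{L^1(\R)}\,\rho(\epsilon)
\]
for every $x\in[-d,d]$. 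Choosing $\epsilon$ small enough that $\|\omega\|_{L^1(\R)}\rho(\epsilon)<\delta/2$ (and $\epsilon<m/2$), we get $(T(u_-+\epsilon))(x)\leq(Tu_-)(x)+\delta/2\leq u_-(x)-\delta/2<u_-(x)\leq u_+(x)$ for all $x$; hence $\min\{(T(u_-+\epsilon))(x),u_+(x)\}=(T(u_-+\epsilon))(x)$ and $\max\{(T(u_-+\epsilon))(x),u_-(x)\}=u_-(x)$, so $\widehat{T}(u_-+\epsilon)=u_-$ identically and $(u_-+\epsilon)-\widehat{T}(u_-+\epsilon)\equiv\epsilon$ lies in the interior of $K$. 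The companion inequality is obtained in the same way: $Tu_+\gg u_+$ yields $\delta'>0$ with $(Tu_+)(x)-u_+(x)\geq\delta'$, the same estimate gives $(T(u_+-\epsilon))(x)\geq u_+(x)+\delta'/2>u_+(x)$ for small $\epsilon$, hence $\widehat{T}(u_+-\epsilon)=u_+$ and $\widehat{T}(u_+-\epsilon)-(u_+-\epsilon)\equiv\epsilon$.

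The only genuinely substantive point is the uniform smallness of $T(u_-+\epsilon)-Tu_-$ (and of $T(u_+-\epsilon)-Tu_+$), which rests on the global uniform continuity of $f$ together with $\omega\in L^1(\R)$; everything else is bookkeeping with the truncations and with the order structure of $C([-d,d])$.
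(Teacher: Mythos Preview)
Your proof is correct and follows the same overall structure as the paper's, though a few details are handled differently and, in places, more efficiently.

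For compactness, you factor $\widehat{T}=P\circ T$ and observe that the truncation $P$ is $1$-Lipschitz in sup-norm; the paper instead carries out the sequential argument by hand, passing the convergence of $(Tu_{n_k})$ through $\min$ and $\max$. Your argument is shorter and makes the mechanism transparent.

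For the order inequalities, the paper uses the continuity of the functional $\rho(u)=\inf_{x}\bigl(u(x)-(Tu)(x)\bigr)$: since $\rho(u_-)>0$, one gets $\rho(u_-+\epsilon)>0$ for small $\epsilon$, hence $T(u_-+\epsilon)\ll u_-+\epsilon$, and then $\widehat{T}(u_-+\epsilon)=\max\{T(u_-+\epsilon),u_-\}\ll u_-+\epsilon$. You instead bound $\|T(u_-+\epsilon)-Tu_-\|_\infty$ explicitly using the uniform continuity of $f$ (legitimate since $f$ is continuous and eventually constant) and $\omega\in L^1(\R)$, which buys you the slightly stronger conclusion $T(u_-+\epsilon)\ll u_-$ and hence $\widehat{T}(u_-+\epsilon)=u_-$ identically. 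Both routes are valid; yours is more quantitative, the paper's slightly more abstract.
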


\begin{proof}
By the monotonicity of the operator $T$ one has $T u_1\geq T u_2$ whenever $u_1\geq u_2$. Hence,
\begin{equation*}
\min\{(T u_1)(x), u_+(x)\} \geq \min\{(T u_2)(x), u_+(x)\}\quad\text{for all}\quad x\in[-d,d],
\end{equation*}
and, therefore,
\begin{equation*}
\max\left\{\min\{(T u_1)(x), u_+(x)\}, u_-(x)\right\} \geq \max\left\{\min\{(T u_2)(x), u_+(x)\},u_-(x)\right\}.
\end{equation*}
Thus, $\widehat{T}$ is monotone increasing.

Let $(u_n)$ be an arbitrary sequence in $[u_-,u_+]$. Since $T$ is compact, $(Tu_n)$ has a subsequence $(Tu_{n_k})$ converging to some $v\in C([-d,d])$. For arbitrary $\epsilon>0$ let $n_0\in\N$ be so large that
\begin{equation*}
|T u_{n_k}(x) - v(x)| < \epsilon\quad\text{for all}\quad k\geq n_0\quad\text{and}\quad x\in[-d,d].
\end{equation*}
Then one has
\begin{equation*}
\min\{T u_{n_k}(x), u_+(x)\} \leq \min\{v(x)+\epsilon, u_+(x)\}\leq \min\{v(x), u_+(x)\}+\epsilon
\end{equation*}
and
\begin{equation*}
\min\{T u_{n_k}(x), u_+(x)\} \geq \min\{v(x)-\epsilon, u_+(x)\}\geq \min\{v(x), u_+(x)\}-\epsilon,
\end{equation*}
which shows that $\min\{T u_{n_k}(x), u_+(x)\}$ converges uniformly to $\min\{v(x),u_+(x)\}$.
Similarly, one can show that $(\widehat{T}u_{n_k})$ converges uniformly to $\max\left\{\min\{v(x), u_+(x)\}, u_-(x) \right\}$, thus, proving that the range of $\widehat{T}$ is relatively compact.

Now assuming that the sequence $(u_n)$ converges to some $u\in [u_-,u_+]$ and using the continuity of $T$, we arrive at the conclusion that $(\widehat{T}u_n)$ converges to $(\widehat{T}u)$, thus proving that $\widehat{T}$ is continuous.

Since the mapping $u\in C([-d,d])\mapsto \inf_{x\in[-d,d]}u(x)$ is continuous, the functional $\rho: C([-d,d]) \to \R$,
\begin{equation*}
\rho(u) := \inf_{x\in[-d,d]} \left( u(x) - (Tu)(x)\right)
\end{equation*}
is continuous as well. Hence, due to $\rho(u_-)>0$, there is an $\epsilon>0$ such that $\rho(u)>0$ for all $u\in B_{2\epsilon}(u_-)$. We can choose $\epsilon$ so small that $u_-+\epsilon \ll u_+ -\epsilon$. Thus,
\begin{equation*}
T(u_-+\epsilon) \ll u_- +\epsilon \ll u_+ - \epsilon,
\end{equation*}
from which it follows that
\begin{equation*}
\min\{T(u_-+\epsilon),u_+\}=T(u_-+\epsilon)
\end{equation*}
and consequently
\begin{equation*}
\widehat{T}(u_-+\epsilon) = \max\{T(u_-+\epsilon), u_-\} \ll u_- + \epsilon.
\end{equation*}

The second inequality can be proven in the same way.
\end{proof}

The main tool for the proof of Theorem \ref{introthm:1} is Amann's theorem on three fixed points \cite[Theorem 14.2 and Corollary 14.3]{Amann} in the version of Zeidler \cite[Theorem 7.F and Corollary 7.40]{Zeidler}.

\begin{theorem}\label{thm:1.3}
Let $X$ be a real Banach space with an order cone having a nonempty interior. Assume there are four points in $X$
\begin{equation*}
p_1 \ll p_2 < p_3 \ll p_4
\end{equation*}
and a monotone increasing image compact operator $\widehat{T}:[p_1,p_4]\to X$ such that
 \begin{equation*}
\widehat{T}p_1 = p_1,\quad \widehat{T} p_2 < p_2,\quad \widehat{T} p_3 > p_3,\quad \widehat{T} p_4 = p_4.
\end{equation*}
Then $\widehat{T}$ has a third fixed point $p$ satisfying $p_1<p<p_4$, $p\notin[p_1,p_2]$, and $p\notin[p_3,p_4]$.
\end{theorem}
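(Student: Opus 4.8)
The plan is to derive Theorem~\ref{thm:1.3} from the fixed point index for compact maps on retracts of Banach spaces (as developed, for instance, in \cite{Amann}; see also \cite{Zeidler}). Put $M:=[p_1,p_4]$; being closed, bounded and convex, $M$ is a retract of $X$, so the fixed point index $i(\widehat T,U,M)\in\Z$ is defined for every relatively open $U\subseteq M$ on whose boundary $\partial_M U$ the operator $\widehat T$ has no fixed point, and it obeys the usual normalization, additivity, excision, homotopy invariance and solution axioms. Since $\widehat T$ is monotone increasing, $p_1=\widehat Tp_1\leq\widehat Tu\leq\widehat Tp_4=p_4$ for all $u\in M$, so $\widehat T(M)\subseteq M$; the same estimate shows that $\widehat T$ maps each of $[p_1,p_2]$ and $[p_3,p_4]$ into itself, and compactness makes $\widehat T(M)$ relatively compact, so the set of fixed points of $\widehat T$ in $M$ is compact.

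First I would record that $i(\widehat T,M,M)=1$: the homotopy $H(t,u):=(1-t)\widehat Tu+tp_1$ maps $[0,1]\times M$ into $M$ by convexity together with $p_1\leq\widehat Tu\leq p_4$, has no fixed points on $\partial_M M=\emptyset$, and connects $\widehat T$ to the constant map $p_1\in M$. The heart of the argument is then a localization. I would introduce the relatively open sets
\[
V_-:=\{u\in M:u\ll p_2\},\qquad V_+:=\{u\in M:u\gg p_3\};
\]
they are open because $\mathrm{int}\,K\neq\emptyset$ and disjoint because $p_2\leq p_3$ (a common point $u$ would give $p_3\ll u\ll p_2$, hence $p_3\ll p_2$, which with $p_2\leq p_3$ forces $0\in\mathrm{int}\,K$, impossible for a proper cone). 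To compute $i(\widehat T,V_-,M)$ I would use the homotopy $H_-(t,u):=\widehat T\big((1-t)u+tp_2\big)$: for $u$ in $\overline{V_-}=\{u\in M:u\leq p_2\}$ (the closure uses $p_1\ll p_2$) one has $p_1\leq(1-t)u+tp_2\leq p_2\leq p_4$, so $H_-$ is a compact homotopy on $\overline{V_-}$ joining $\widehat T$ to the constant $\widehat Tp_2$, and a fixed point $u$ of any $H_-(t,\cdot)$ on $\partial_M V_-$ would satisfy $u=\widehat T\big((1-t)u+tp_2\big)\leq\widehat Tp_2\ll p_2$, hence $u\in V_-$, a contradiction. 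Since $\widehat Tp_2\ll p_2$ lies in $V_-$, homotopy invariance and normalization give $i(\widehat T,V_-,M)=1$, and symmetrically $i(\widehat T,V_+,M)=1$.

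To conclude I would note, by the estimate just used, that every fixed point of $\widehat T$ in $\overline{V_-}\supseteq[p_1,p_2]$ actually lies in $V_-$, and likewise for $V_+$. Hence, putting $W:=M\setminus(\overline{V_-}\cup\overline{V_+})$, the three sets $V_-$, $V_+$, $W$ are pairwise disjoint and relatively open, $\widehat T$ has no fixed point on any of their boundaries, and their union contains all fixed points of $\widehat T$. Additivity then yields
\[
1=i(\widehat T,M,M)=i(\widehat T,V_-,M)+i(\widehat T,V_+,M)+i(\widehat T,W,M)=2+i(\widehat T,W,M),
\]
so $i(\widehat T,W,M)=-1\neq0$ and, by the solution property, $\widehat T$ has a fixed point $p\in W$. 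Since $[p_1,p_2]\subseteq\overline{V_-}$ and $[p_3,p_4]\subseteq\overline{V_+}$ are disjoint from $W$, this $p$ lies in neither interval; and $p=p_1$ or $p=p_4$ is impossible for the same reason, so $p_1<p<p_4$, which is the assertion.

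I expect the main obstacle to be the careful handling of the fixed point index on the retract $M$ (alternatively one transports everything to $X$ via a retraction onto $M$, as in \cite{Amann}) and, above all, the verification that $\widehat T$ has no fixed points on $\partial_M V_\pm$ --- this is precisely the step where strictness of the inequalities at $p_2$ and $p_3$ enters. I would therefore carry out the argument in the form in which it is actually used here, namely with $\widehat Tp_2\ll p_2$ and $\widehat Tp_3\gg p_3$ (these are what Lemma~\ref{lem:epsilon} provides); under the weaker hypotheses $\widehat Tp_2<p_2$ and $\widehat Tp_3>p_3$ one has to enlarge the localizing neighborhoods $V_\pm$ slightly around the compact set of ``boundary'' fixed points to keep the index computation admissible, which is the one genuinely delicate point of the proof.
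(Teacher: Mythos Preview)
The paper does not supply its own proof of Theorem~\ref{thm:1.3}; it is quoted as a tool from \cite[Theorem 14.2 and Corollary 14.3]{Amann} and \cite[Theorem 7.F and Corollary 7.40]{Zeidler}. Your sketch is precisely the fixed-point-index argument found in those references, and your closing caveat is accurate: the clean index computation you wrote needs $\widehat Tp_2\ll p_2$ and $\widehat Tp_3\gg p_3$, which is exactly what Lemma~\ref{lem:epsilon} provides and all that the paper requires for its application. One minor slip: the order interval $M=[p_1,p_4]$ need not be norm bounded in a general ordered Banach space --- that requires normality of the cone, which the paper explicitly invokes for $C([-d,d])$ immediately after stating the theorem --- but boundedness is irrelevant to your argument, since any closed convex set is a retract and image-compactness of $\widehat T$ is part of the hypothesis.
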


Recall that the operator $\widehat{T}$ is called image compact if it is continuous and its image $\widehat{T}[p_1,p_4]$ is relatively compact in $X$. In the case $X=C([-d,d])$, the order cone $K$ defined in \eqref{eq:K} is normal, that is, the order interval $[p_1,p_4]$ is norm bounded (see, e.g., \cite{Guo}). Therefore, the operator $\widehat{T}$ is image compact if and only if it is compact.

We choose $p_1=u_-$, $p_2=u_- + \epsilon$, $p_3=u_+-\epsilon$, $p_4=u_+$, where $\epsilon>0$ as in Lemma \ref{lem:epsilon}. Theorem \ref{thm:1.3} yields the existence of a fixed point $u_\ast$ of the operator $\widehat{T}$ satisfying $u_- \leq u_\ast \leq u_+$. Obviously, $u_\ast$ is a fixed point of the operator $T$ defined in \eqref{eq:T} as well.

\begin{lemma}\label{lem:bump}
If a fixed point $u$ of the operator $T$ satisfies the inequality $u(d)\leq u_+(d)=h$, then
\begin{equation*}
 \widetilde{u}(x)=\int_{-d}^{d} \omega(x-y) f(u(y)-h)dy,\quad x\in\R.
\end{equation*}
is a bump which solves \eqref{eq:1}.
\end{lemma}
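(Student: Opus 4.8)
The key observation is that a fixed point $u=Tu$ of the integral operator $T$ on $[-d,d]$ already coincides with its extension $\widetilde u$ there, namely $\widetilde u(x)=u(x)$ for $x\in[-d,d]$; hence the whole statement reduces to understanding $\widetilde u$ outside $[-d,d]$. Since $\widetilde u(x)=\int_{-d}^{d}\omega(x-y)f(u(y)-h)\,dy=\int_{-d}^{d}\omega(x-y)f(\widetilde u(y)-h)\,dy$ for every $x\in\R$, the function $\widetilde u$ is a stationary solution of \eqref{eq:1} as soon as $\int_{|y|>d}\omega(x-y)f(\widetilde u(y)-h)\,dy=0$ for all $x$, and because $f$ vanishes on $(-\infty,0]$ it is enough to prove the pointwise bound
\[
\widetilde u(x)\le h\qquad\text{for all}\quad |x|\ge d .
\]
This bound also shows $\supp f(\widetilde u(\cdot)-h)\subseteq[-d,d]$.

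The plan for the bound is to use Lemma \ref{lem:blin}. For $x\ge d$, inequality \eqref{eq:cond} gives $\omega(x-y)\le\omega(d-y)$ for all $y\in[-d,d]$, so, since $f(u(\cdot)-h)\ge0$,
\[
\widetilde u(x)=\int_{-d}^{d}\omega(x-y)f(u(y)-h)\,dy\le\int_{-d}^{d}\omega(d-y)f(u(y)-h)\,dy=(Tu)(d)=u(d)\le h .
\]
For $x\le -d$ I apply the same estimate to $-x\ge d$ and invoke the evenness of $\omega$: $\omega(x-y)=\omega(-x-(-y))\le\omega(d-(-y))=\omega(d+y)$, which yields $\widetilde u(x)\le(Tu)(-d)=u(-d)$; it then remains to note $u(-d)\le h$, which for the fixed point at hand holds because it lies in the order interval $[u_-,u_+]$ and $u_+$ is even with $u_+(d)=h$, so $u(-d)\le u_+(-d)=h$. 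Consequently $\widetilde u\le h$ on $\R\setminus(-d,d)$ and, by the reduction above, $\widetilde u$ solves \eqref{eq:1}.

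It remains to verify that $\widetilde u$ is a bump, i.e.\ that $I:=\{x\in\R:\widetilde u(x)>h\}$, whose closure equals $\supp f(\widetilde u(\cdot)-h)$, is an interval. Since $\widetilde u=u\ge u_-$ on $[-d,d]$ and $u_-$ is even, nonincreasing on $[0,d]$, with $u_-(\Delta_-)=\int_{0}^{2\Delta_-}\omega(y)\,dy=h$, one has $\{x:u_-(x)>h\}=(-\Delta_-,\Delta_-)$, and therefore $(-\Delta_-,\Delta_-)\subseteq I\subseteq(-d,d)$. Hence $I$ can fail to be an interval only through a \emph{hole} inside $[\Delta_-,d)$ or, symmetrically, inside $(-d,-\Delta_-]$: there would exist $\Delta_-\le p<q<d$ with $\widetilde u(p)=\widetilde u(q)=h$, $\widetilde u\le h$ on $[p,q]$ (so $g:=f(\widetilde u(\cdot)-h)$ vanishes on $[p,q]$), and $\widetilde u>h$ on a right neighbourhood of $q$. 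To exclude this I would use condition (vii) of Assumption \ref{ass:w}: writing $\widetilde u=\omega\ast g$ with $g\ge0$ supported in $[-d,d]$ and comparing the values $\widetilde u(p)$, $\widetilde u$ at an interior point of $(p,q)$, and $\widetilde u$ just to the right of $q$, the monotonicity of $\omega$ on $[0,2d]$ forces the contribution to $\widetilde u$ of the mass of $g$ lying to the left of $p$ (which contains the central block on $(-\Delta_-,\Delta_-)$) to be strictly decreasing in $x$ across $[p,q]$, while the only mechanism that can lift $\widetilde u$ back above $h$ to the right of $q$ is the mass of $g$ there; balancing the two effects against the threshold conditions at $p$ and $q$ produces a contradiction. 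Carrying out this comparison — converting the global monotonicity of $\omega$ into unimodality-type information about the convolution $\omega\ast g$ for a general nonnegative density $g$ — is the technical core of the proof and the step I expect to be the main obstacle; once it is done, $I$ has no holes, is an interval, and $\widetilde u$ is a bump solving \eqref{eq:1}.
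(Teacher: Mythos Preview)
Your argument that $\widetilde u\le h$ for $|x|\ge d$ via Lemma~\ref{lem:blin}, and hence that $\widetilde u$ solves the stationary equation~\eqref{eq:utilde}, is exactly the paper's proof. The paper's version is in fact briefer than yours: it writes only the case $x>d$, obtaining $\widetilde u(x)\le\widetilde u(d)\le h$, and then declares the result; the case $x<-d$ and the bound $u(-d)\le h$ are left implicit. Your more careful handling of $u(-d)$, via $u\le u_+$ and the evenness of $u_+$, is a genuine improvement in rigor, though note that it uses the order-interval information $u\in[u_-,u_+]$ rather than the bare hypothesis $u(d)\le h$ of the lemma as stated.

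Where you diverge from the paper is in the second half, the verification that $\supp f(\widetilde u(\cdot)-h)$ is actually an interval rather than merely a subset of $[-d,d]$. The paper does \emph{not} carry out this step at all: its proof ends once~\eqref{eq:utilde} is established. So the ``technical core'' you flag as the main obstacle is simply absent from the paper's argument. Your concern is legitimate given the stated definition of a bump, and your sketch (comparing contributions to $\omega\ast g$ from mass on either side of a putative hole) points in a plausible direction, but as you yourself note it is not a completed proof; unimodality of $\omega\ast g$ does not follow from monotonicity of $\omega$ on $[0,2d]$ alone without further structure on $g$. If you want a clean resolution, one route is to observe that the fixed point actually produced by the Amann argument can be taken even (since $T$ commutes with the reflection $x\mapsto -x$ and the order interval $[u_-,u_+]$ is reflection-invariant), and for an even $u$ the convolution $\widetilde u=\omega\ast f(u-h)$ is even and, by condition~(vii), nonincreasing on $[0,d]$; this forces the superlevel set $\{\widetilde u>h\}$ to be a symmetric interval.
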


\begin{proof}
Due to condition (vii) of Assumption \ref{ass:w} and Lemma \ref{lem:blin}, we have $\omega(x-y)\leq \omega(d-y)$ for all $x>d$. Hence,
$\widetilde{u}(x)\leq \widetilde{u}(d)\leq h$. This implies that $\widetilde{u}(x)$ solves the equation
 \begin{equation}\label{eq:utilde}
 \widetilde{u}(x)=\int_{-\infty}^{\infty} \omega(x-y) f(\widetilde{u}(y)-h)dy,\quad x\in\R.
\end{equation}
\end{proof}

\begin{remark}
We note that $\widetilde{u}$ is not an isolated solution of \eqref{eq:utilde}.
Indeed, $\widetilde{u}(\cdot-c)$ is again a solution for any $c\in\R$.
\end{remark}

%%%%%%%%%%%%%%%%%%%%%%%%%%%%%%%%%%%%%%%%%%%%%%%
\section{Proof of Theorem \ref{introthm:2}}
%%%%%%%%%%%%%%%%%%%%%%%%%%%%%%%%%%%%%%%%%%%%%%%

The proof of Theorem \ref{introthm:2} heavily relies on the Krein-Rutman theorem (see, e.g., \cite[Proposition 7.26]{Zeidler} or \cite[Theorem 6.1]{Krein:Rutman}):

\begin{theorem}
Let $X$ be a real Banach space with the order cone $K$ having a nonempty interior. Suppose that $T:X \to X$ is linear, compact, and positive, with the spectral radius $r(T)>0$. Then $r(T)$ is an eigenvalue of $T$ with all eigenvectors in $K$.
\end{theorem}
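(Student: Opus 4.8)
The plan is to reprove this classical statement (the Krein--Rutman theorem) by the standard resolvent-plus-compactness argument, obtaining the eigenvector as a limit of normalized resolvent images of an interior point of the cone; here $r(T)$ denotes the spectral radius, and $X$ is complexified wherever convenient.

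First I would observe that for real $\lambda>r(T)$ the Neumann series $R(\lambda):=(\lambda I-T)^{-1}=\sum_{n\geq 0}\lambda^{-(n+1)}T^{n}$ converges in operator norm, and that since $T$ is positive each partial sum maps $K$ into $K$; as $K$ is closed, $R(\lambda)$ is a positive operator. Because the coefficient operators $T^{n}$ in this power series (in the variable $\mu=1/\lambda$) are positive, it is a standard fact, essentially Pringsheim's theorem applied to the resolvent, that $\lambda\mapsto R(\lambda)$ has a singularity at $\lambda=r(T)$, i.e.\ $r(T)\in\sigma(T)$. Together with compactness of $T$ and $r(T)>0$, the Riesz--Schauder theory then already gives that $r(T)$ is an eigenvalue of the complexified operator; the remaining steps serve to produce a genuine real eigenvector lying in $K$. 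In any case we record that $\|R(\lambda)\|\geq\dist(\lambda,\sigma(T))^{-1}\to\infty$ as $\lambda\downarrow r(T)$.

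Next, fix $u_{0}$ in the interior of $K$. Every $x\in X$ satisfies $-c_{x}u_{0}\leq x\leq c_{x}u_{0}$ for a suitable $c_{x}>0$, and applying the positive operator $R(\lambda)$ yields $-c_{x}R(\lambda)u_{0}\leq R(\lambda)x\leq c_{x}R(\lambda)u_{0}$. Since the order cone of $C([-d,d])$ is normal, this forces $\|R(\lambda)x\|\leq C\,c_{x}\,\|R(\lambda)u_{0}\|$ for all $x$, so by the uniform boundedness principle $\|R(\lambda)u_{0}\|$ cannot remain bounded as $\lambda\downarrow r(T)$; hence $\|R(\lambda)u_{0}\|\to\infty$. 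Now choose $\lambda_{n}\downarrow r(T)$ and set $x_{n}:=R(\lambda_{n})u_{0}/\|R(\lambda_{n})u_{0}\|$, so that $x_{n}\in K$, $\|x_{n}\|=1$, and $(\lambda_{n}I-T)x_{n}=u_{0}/\|R(\lambda_{n})u_{0}\|\to 0$, i.e.\ $Tx_{n}-\lambda_{n}x_{n}\to 0$. By compactness of $T$ we may pass to a subsequence with $Tx_{n_{k}}\to y$; then $\lambda_{n_{k}}x_{n_{k}}\to y$, and since $\lambda_{n_{k}}\to r(T)>0$ we get $x_{n_{k}}\to x_{\ast}:=y/r(T)$. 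Consequently $\|x_{\ast}\|=1$ (so $x_{\ast}\neq 0$), $x_{\ast}\in K$ by closedness of $K$, and $Tx_{\ast}=\lim_{k}Tx_{n_{k}}=y=r(T)x_{\ast}$. Thus $x_{\ast}$ is a nonzero eigenvector of $T$ lying in $K$ with eigenvalue $r(T)$.

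The step I expect to be the main obstacle is converting the divergence of the resolvent \emph{norm} into divergence of the single vector $\|R(\lambda)u_{0}\|$: this is exactly where the hypothesis that $K$ has nonempty interior is used, and in the fully general Banach-space formulation (without a normality assumption on $K$) it requires somewhat more care. For the use of the theorem made in this paper, however, the cone is the set of nonnegative functions in $C([-d,d])$, which is normal, so the argument above applies directly.
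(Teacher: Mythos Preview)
The paper does not prove this statement at all: it is quoted as the classical Krein--Rutman theorem, with references to Zeidler and to Kre\u{\i}n--Rutman, and is used as a black box in the proof of Theorem~2. So there is no ``paper's own proof'' to compare against.

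Your argument is the standard resolvent-and-compactness proof, and it is correct for the way the result is used in the paper. Two small remarks. First, the phrase ``with all eigenvectors in $K$'' in the paper's formulation should be read as ``with an eigenvector in $K$''; the literal reading is false already for $T=I$ on $\mathbb{R}^2$, and what you prove is precisely the existence of a positive eigenvector. Second, you correctly identify the one soft spot: the step from $\|R(\lambda)\|\to\infty$ to $\|R(\lambda)u_0\|\to\infty$ via the order inequalities uses normality of the cone, which is not part of the hypotheses as stated. For the paper's application, $X=C([-d,d])$ with the cone of nonnegative functions, the cone is normal and your argument goes through verbatim; for a general solid cone one has to argue a little differently (e.g.\ work with a suitable positive functional, or renorm using the order-unit $u_0$), but this is a well-known refinement and does not affect anything in the paper.
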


The second tool is a classical result on the instability of equilibrium solutions of differential equations
\cite[Theorem VII.2.3]{Daleckij:Krein} (\textit{cf.} also Corollary 5.1.6 in \cite{Henry}).

\begin{theorem}\label{thm:Krein}
Let $X$ be a Banach space, $A$ be a linear continuous operator on $X$, $F: X\to X$ a nonlinear Lipschitz continuous operator. If
\begin{itemize}
\item[(i)] $v_0=0$ satisfies $Av_0+Fv_0=0$,
\item[(ii)] the operator $F$ obeys the estimate
\begin{equation}\label{eq:est}
\|Fv\|\leq C \|v\|^{1+\mu},\qquad C>0, \qquad \mu>0
\end{equation}
for all $u\in X$ with $\|v\|<\epsilon$ for some $\epsilon>0$,
\item[(iii)] the spectrum $\sigma(A)$ contains a point $\lambda$ with $\Re\lambda>0$,
\end{itemize}
then $v_0$ is an unstable equilibrium of the differential equation
\begin{equation*}
v_t = Av + Fv,\qquad t>0.
\end{equation*}
\end{theorem}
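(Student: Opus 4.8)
The plan is to deduce instability from the exponential growth that hypothesis (iii) forces on the linearized flow, exploiting that $F$ is of higher order near the equilibrium. First I note that (i) together with $Av_0=A\cdot 0=0$ gives $F(0)=0$, so $v_0=0$ is genuinely an equilibrium, while (ii) makes $F$ superlinear at the origin. Since $A$ is bounded and $F$ is Lipschitz, the initial value problem for $v_t=Av+Fv$ generates a unique local flow, and it suffices to exhibit, for each small $\epsilon>0$, a datum $v_0$ with $0<\|v_0\|<\epsilon$ whose trajectory leaves $B_\epsilon(0)$ in finite time.

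The first step is a spectral splitting. Because $A$ is bounded, $\sigma(A)$ is compact, and by (iii) it meets the open right half-plane. I would isolate a nonempty spectral set $\sigma_1\subset\{\Re z>0\}$ from its complement $\sigma_2=\sigma(A)\setminus\sigma_1$ by a contour, obtaining the Riesz projection $P$, its complement $Q=I-P$, and the $A$-invariant splitting $X=X_1\oplus X_2$ with $X_1=PX$, $X_2=QX$; such a separation is available whenever the right half-plane part of the spectrum is isolated, which is automatic in the application, where $A$ is $-\mathrm{id}$ plus a compact operator and the unstable spectrum consists of finitely many eigenvalues of finite multiplicity. By the spectral mapping theorem for the bounded generators $A|_{X_1}$, $A|_{X_2}$ one has matching growth bounds, and after passing to the equivalent norm $\|v\|=\|Pv\|+\|Qv\|$ (in which $P,Q$ have norm one) I may assume $\|\e^{tA}y\|\ge \e^{\alpha t}\|y\|$ for $y\in X_1$ and $\|\e^{tA}z\|\le \e^{\beta t}\|z\|$ for $z\in X_2$, with $\beta<\alpha$ lying strictly between $\sup\{\Re z:z\in\sigma_2\}$ and $\inf\{\Re z:z\in\sigma_1\}$, so in particular $\alpha>0$.

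The heart of the proof is a cone argument. Writing $p(t)=\|Pv(t)\|$ and $q(t)=\|Qv(t)\|$, so that $\|v\|=p+q$, and using the variation-of-constants formula together with (ii) in the form $\|PFv\|,\|QFv\|\le C(p+q)^{1+\mu}$, I would establish the one-sided differential inequalities
\begin{equation*}
D^+p\ge \alpha p - C(p+q)^{1+\mu},\qquad D^+q\le \beta q + C(p+q)^{1+\mu}
\end{equation*}
along the flow, valid while $\|v\|$ stays small. On the cone $\mathcal{C}=\{p\ge q\}$ one has $p+q\le 2p$, so on $\mathcal{C}\cap(B_r(0)\setminus\{0\})$ with $r$ small the first inequality gives $D^+p\ge(\alpha-2^{1+\mu}Cp^\mu)\,p\ge\tfrac{\alpha}{2}p$, while on the boundary $p=q$ combining the two inequalities yields $D^+(p-q)\ge(\alpha-\beta)p-2C(2p)^{1+\mu}>0$ for $p$ small and positive. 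Hence $\mathcal{C}\cap(B_r(0)\setminus\{0\})$ is forward invariant, and any trajectory starting there satisfies $p(t)\ge p(0)\,\e^{\alpha t/2}$ as long as it remains in $B_r(0)$. Since $\|v(t)\|\ge p(t)$ grows without bound, the trajectory must exit $B_r(0)$ in finite time; choosing $v_0\in X_1$ with $0<\|v_0\|<\epsilon<r$ (so $q(0)=0\le p(0)$ and $v_0\in\mathcal{C}$) proves instability.

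The main obstacle is the rigorous implementation of this step: deriving the Dini-derivative inequalities from the variation-of-constants formula, with the nonlinear integral term controlled uniformly for small $\|v\|$, and verifying that the cone is genuinely forward invariant so that the superlinear term $(p+q)^{1+\mu}$ can never overturn the linear growth before the trajectory escapes. The renorming that converts the spectral bounds into clean exponential estimates with a definite gap $\alpha-\beta>0$ is exactly what makes these inequalities usable, and securing that gap is immediate once $\sigma_1$ has been isolated, which is the only place where the separation of the right half-plane spectrum is needed.
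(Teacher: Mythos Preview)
The paper does not supply its own proof of this theorem; it is simply quoted as a classical result from Dalecki\u{\i}--Kre\u{\i}n \cite[Theorem VII.2.3]{Daleckij:Krein} (with a parallel reference to Henry \cite[Corollary 5.1.6]{Henry}) and then applied. There is therefore no in-paper argument to compare against.

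That said, your sketch is the standard route taken in those references: isolate an unstable spectral set by a Riesz projection, pass to an adapted norm in which the semigroup obeys sharp exponential bounds on the two invariant subspaces, and run a cone argument driven by one-sided differential inequalities for $p=\|Pv\|$ and $q=\|Qv\|$ to show that an initial datum in the unstable cone must escape any small ball in finite time. The derivation of the Dini inequalities from variation of constants and the cone-invariance step are routine once the exponential dichotomy is in hand.

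You have also put your finger on the one genuine subtlety. The Riesz-projection step requires a nonempty spectral set contained in $\{\Re z>\alpha\}$ for some $\alpha>0$ that is both open and closed in $\sigma(A)$; for an arbitrary bounded $A$ the spectrum can be connected and straddle the imaginary axis, so this is not automatic. In the paper's application $A=-I+\widetilde{T}'(\widetilde{u})$ with $\widetilde{T}'(\widetilde{u})$ compact, hence the unstable spectrum consists of finitely many isolated eigenvalues and the separation is immediate, exactly as you note. The theorem as stated (and as proved in Dalecki\u{\i}--Kre\u{\i}n) does cover the general bounded case, but the argument there is somewhat more delicate than a direct Riesz splitting; your sketch is adequate for the use the paper actually makes of the result.
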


Let $u_\ast\in C([-d,d])$ denote the fixed point of the operator $T$ \eqref{eq:T} referred to in the previous section. From the condition (i)
of Theorem \ref{introthm:2} it follows that $u_\ast$ belongs to $C^1([-d,d])$. For the proof see Lemma \ref{lem:app:2} in Appendix.
Due to \eqref{ineq:pm}, one has $u_\ast(-d)\leq h$, $u_\ast(0)\geq u_-(0)>h$, and $u_\ast(d)\leq h$. Thus, $u_\ast$ is not monotone.

We observe that under the conditions of Theorem \ref{introthm:2} the operator $T$ is Fr\'{e}chet differentiable with
\begin{equation*}
(T^\prime(u) v) (x) = \int_{-d}^d \omega(x-y) f'(u(y)-h) v(y) dy,\qquad v\in C([-d,d]).
\end{equation*}
It is a linear, compact, and positive operator with respect to the cone defined by \eqref{eq:K}.

Since $u_\ast(\pm d)\leq h$, integrating by parts we obtain
\begin{equation*}
\begin{split}
u'_\ast(x) &= \int_{-d}^d \omega'(x-y) f(u_\ast(y)-h) dy = -\int_{-d}^d \frac{\partial}{\partial y} \omega(x-y) f(u_\ast(y)-h) dy \\
&= \int_{-d}^d \omega(x-y) f'(u_\ast(y)-h) u'_\ast(y) dy.
\end{split}
\end{equation*}
Hence, $u'_\ast$ is an eigenfunction of the operator $T'(u_\ast)$ with eigenvalue $1$. Thus, the spectral radius $r(T'(u_\ast))$ is not smaller than $1$.

Assume that $r(T'(u_\ast))=1$. Applying the Krein-Rutman theorem with $X=C([-d,d])$, the cone $K$ defined in \eqref{eq:K}, and the operator $T'(u_\ast)$, we obtain that $u'_\ast(x)\geq 0$ for all $x\in[-d,d]$, which is a contradiction. Thus, $r(T'(u_\ast))>1$. Again by the Krein-Rutman theorem $r(T'(u_\ast))$ is an eigenvalue of $T'(u_\ast)$. Hence, we arrive at the conclusion that the Fr\'{e}chet derivative of $u\mapsto -u + Tu$ at the point $u_\ast$ has a strictly positive eigenvalue.

Denote by $\widetilde{T}$ the nonlinear integral operator defined via
\begin{equation*}
(\widetilde{T} u)(x) := \int_\R \omega(x-y) f(u(y)-h) dy.
\end{equation*}
Observe that under the condition (ii) of Theorem \ref{introthm:2}, $\widetilde{T}$ maps $C_\infty(\R)$ into itself.
Hence, the bump $\widetilde{u}$ referred to in Theorem \ref{introthm:1} belongs to $C_\infty(\R)$.

\begin{lemma}\label{lem:spec}
Let the conditions of Theorem \ref{introthm:2} be satisfied. Then the Fr\'{e}chet derivative $\widetilde{T}'(\widetilde{u})\, :\, C_\infty(\R)\to C_\infty(\R)$ of the operator $\widetilde{T}$,
\begin{equation*}
\left(\widetilde{T}'(\widetilde{u}) v\right) (x) = \int_\R \omega(x-y) f'(\widetilde{u}(y)-h) v(y) dy,
\end{equation*}
is compact.
\end{lemma}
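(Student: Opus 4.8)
The plan is to reduce the claim to the Arzel\`{a}--Ascoli criterion for relative compactness in $C_\infty(\R)$, the decisive observation being that the kernel of $\widetilde T'(\widetilde u)$ is, in the $y$-variable, supported in a compact interval. (That $\widetilde T$ is Fr\'{e}chet differentiable on $C_\infty(\R)$ with the derivative as stated follows exactly as for the operator $T$ on $C([-d,d])$ treated above, using $f\in C^{1,\mu}$; the substantive point is compactness.)

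First I would note that $f'\equiv 0$ on $(-\infty,0]$, which follows from $f\equiv 0$ there together with $f\in C^1(\R)$ (condition (iii) of Theorem \ref{introthm:2}). By Lemma \ref{lem:bump} and the bounds \eqref{ineq:pm}, together with the symmetry of $\omega$, one has $\widetilde u(y)\le h$ for all $|y|\ge d$, so $f'(\widetilde u(y)-h)=0$ whenever $|y|\ge d$. Hence
\begin{equation*}
\bigl(\widetilde T'(\widetilde u)\,v\bigr)(x)=\int_{-d}^{d}\omega(x-y)\,f'(\widetilde u(y)-h)\,v(y)\,dy,\qquad v\in C_\infty(\R),
\end{equation*}
and $\widetilde T'(\widetilde u)$ factors as $\widetilde T'(\widetilde u)=S\circ R$, where $R\colon C_\infty(\R)\to C([-d,d])$ is the bounded restriction operator and $S\colon C([-d,d])\to C_\infty(\R)$ is the integral operator with this same kernel. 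Since $R$ is bounded, it suffices to prove that $S$ is compact. That $S$ really maps into $C_\infty(\R)$ is clear from condition (ii) of Theorem \ref{introthm:2}, since $\sup_{y\in[-d,d]}|\omega(x-y)|\le\sup_{|z|\ge|x|-d}|\omega(z)|\to 0$ as $|x|\to\infty$.

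Next I would verify the three conditions characterizing relatively compact subsets of $C_\infty(\R)$ for the image $S(B_1)$ of the closed unit ball $B_1\subset C([-d,d])$. Uniform boundedness is immediate: $\|Sw\|_\infty\le 2d\,\|\omega\|_\infty\,\|f'\|_\infty$ for $w\in B_1$, with $f'$ bounded because it is continuous and supported in $[0,\tau]$. Equicontinuity follows from condition (i) of Theorem \ref{introthm:2}: since $\omega\in W^{1,\infty}(\R)$ is Lipschitz,
\begin{equation*}
\bigl|(Sw)(x_1)-(Sw)(x_2)\bigr|\le 2d\,\|f'\|_\infty\,\|\omega'\|_{L^\infty(\R)}\,|x_1-x_2|,\qquad w\in B_1,
\end{equation*}
so $S(B_1)$ is in fact equi-Lipschitz. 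Uniform smallness at infinity follows from the estimate recorded above: $\sup_{w\in B_1}\sup_{|x|\ge N}|(Sw)(x)|\le 2d\,\|f'\|_\infty\,\sup_{|z|\ge N-d}|\omega(z)|\to 0$ as $N\to\infty$. By the Arzel\`{a}--Ascoli criterion $S(B_1)$ is relatively compact, hence $S$ is compact, and therefore so is $\widetilde T'(\widetilde u)=S\circ R$.

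The one point requiring genuine care is the non-compactness of the domain $\R$: the ordinary Arzel\`{a}--Ascoli theorem does not apply, and one must additionally establish uniform smallness at infinity, which is precisely where the hypotheses (i) and (ii) on $\omega$ enter. Everything else is routine once the $y$-integration has been confined to the compact interval $[-d,d]$.
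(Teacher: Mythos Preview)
Your proof is correct and shares with the paper the essential opening move: recognizing that $f'(\widetilde u(y)-h)=0$ for $|y|\ge d$, so the $y$-integration collapses to the compact interval $[-d,d]$. From there the two arguments diverge in their technical execution. The paper invokes the Dunford--Schwartz compactness criterion for bounded subsets of $C(\R)$: after establishing uniform smallness for $|x|>R$ (from condition (ii) on $\omega$), it covers $[-R,R]$ by finitely many sets using the known compactness of the integral operator on $C([-R,R])$ with continuous kernel, and then adjoins the two tails $E_\pm$. Your route is more direct: you verify the three Arzel\`a--Ascoli conditions for $C_\infty(\R)$ by hand, obtaining equicontinuity globally from the Lipschitz property of $\omega\in W^{1,\infty}(\R)$. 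This buys a cleaner, fully self-contained argument at the cost of invoking condition (i) of Theorem \ref{introthm:2}, which the paper's version of the lemma does not actually need (mere continuity of $\omega$, already in Assumption \ref{ass:w}, suffices for compactness of the restricted integral operator). Either way, condition (ii) on $\omega$ does the real work of controlling the tails, and both proofs are short.
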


\begin{proof}
The proof is based on the following compactness criterion \cite[Theorem IV.6.5]{Dunford:Schwartz}:
\begin{itemize}
 \item A bounded subset $S\subset C(\R)$ is relatively compact if and only if for every $\epsilon>0$ there is a finite collection of sets $E_i\subset\R$, $i=1,\ldots,n$, $\bigcup_{i=1}^n E_i=\R$, and points $x_i\in E_i$ such that
\begin{equation*}
\sup_{\varphi\in S}\sup_{x\in E_i} |\varphi(x_i)-\varphi(x)|<\epsilon
\end{equation*}
for all $i=1,\ldots,n$.
\end{itemize}

Consider a set
\begin{equation*}
 S:= \{\varphi\in C_\infty(\R)\, :\, \varphi=\widetilde{T}'(\widetilde{u})v,\quad v\in B_1(0)\subset C_\infty(\R) \}.
\end{equation*}
Using the mean value theorem we obtain
\begin{equation*}
\begin{split}
|\varphi(x)| &\leq \int_{-d}^d |\omega(x-y)| f'(\widetilde{u}(y)-h) dy\\
& \leq |\omega(x-\eta)|\cdot f'(\widetilde{u}(\eta)-h)
\end{split}
\end{equation*}
for some $\eta\in[-d,d]$ and any $\varphi\in S$. Hence,
\begin{equation*}
|\varphi(x)| \leq C \sup_{y\in[-d,d]} |\omega(x-y)|
\end{equation*}
for all $\varphi\in S$. Therefore, by the condition (ii) of Theorem \ref{introthm:2}, for an arbitrary $\epsilon>0$ we can choose $R>d$ so large that
\begin{equation*}
\sup_{\varphi\in S} |\varphi(x)| < \epsilon/2\quad\text{for all}\quad |x|>R.
\end{equation*}
Thus, we obtain
\begin{equation}\label{eq:11}
\sup_{\varphi\in S} \sup_{x\in E_\pm} \left| \varphi(x) - \varphi(\pm 2R)\right|<\epsilon,
\end{equation}
where $E_\pm := \{x\in\R\, |\, \pm x > \pm R\}$.

Let $S_0$ be the set in $C([-R,R])$ consisting of all functions in $S$ restricted to the interval $[-R,R]$,
\begin{equation*}
 S_0 := \{\varphi_0= \varphi|_{[-R,R]}\, : \, \varphi\in S\}.
\end{equation*}
This set is the range of the compact integral operator
\begin{equation*}
 v \mapsto \int_{-R}^R \omega(\cdot-y) f'(\widetilde{u}(y)-h) v(y) dy,
\end{equation*}
mapping $C([-R,R])$ into itself. Thus, $S_0$ is relative compact.

By the compactness criterion above, there is a finite collection $(E_i)_{i=1}^n$ of subsets in $[-R,R]$ and points $x_i\in E_i$ such that
\begin{equation*}
\sup_{\varphi\in S}\sup_{x\in E_i} |\varphi(x_i)-\varphi(x)|<\epsilon
\end{equation*}
for all $i=1,\ldots,n$. Combining this with \eqref{eq:11}, we arrive at the conclusion that the collection $(E_1,\ldots,E_n, E_+, E_-)$ with points $(x_1,\ldots,x_n,2R,-2R)$ satisfies the condition of the compactness criterion, thus, proving that $S$ is a relative compact set.
Hence, $\widetilde{T}'(\widetilde{u})$ is a compact operator.
\end{proof}

Now we show that the linear operators $\widetilde{T}'(\widetilde{u})$ and $T'(u_\ast)$ have the same spectra. Since both operators are compact, it suffices to prove that they have the same eigenvalues. Assume that $\lambda\neq 0$ is an eigenvalue of $T'(u_\ast)$ with an eigenfunction $v\in C([-d,d])$. We set
\begin{equation*}
\widetilde{v}(x):=\frac{1}{\lambda} \int_{-d}^d \omega(x-y) f'(u_\ast(y)-h) v(y) dy,\quad x\in\R.
\end{equation*}
It is easy to check that $\widetilde{v}$ is an eigenfunction of $\widetilde{T}'(\widetilde{u})$ corresponding to the eigenvalue $\lambda$.
Conversely, assume that $\widetilde{v}\in C_\infty(\R)$ is an eigenfunction of $\widetilde{T}'(\widetilde{u})$ corresponding to the eigenvalue $\lambda\neq 0$. Then
\begin{equation*}
\lambda \widetilde{v}(x) = \int_\R \omega(x-y) f'(\widetilde{u}(y)-h) \widetilde{v}(y) dy = \int_{-d}^d \omega(x-y) f'(u_\ast(y)-h) \widetilde{v}(y) dy
\end{equation*}
holds for all $x\in[-d,d]$. This implies that $\lambda$ is an eigenvalue of $T'(u_\ast)$ with an eigenfunction $v:=\widetilde{v}|_{[-d,d]}$.

We arrive at the conclusion that the linear operator $\widetilde{T}'(\widetilde{u})$ has an eigenvalue $\lambda>1$, and, thus, the condition (iii) of Theorem \ref{thm:Krein} is fulfilled.

Plugging $u(x,t)=\widetilde{u}(x) + w(x,t)$ into the equation $u_t=-u+\widetilde{T}u$ we obtain
\begin{equation*}
w_t = A w + Fw,
\end{equation*}
where
\begin{equation*}
A v = -v + \widetilde{T}'(\widetilde{u})v\qquad\text{and}\qquad Fv= \widetilde{T}(\widetilde{u}+v) - \widetilde{T} \widetilde{u} - \widetilde{T}'(\widetilde{u})v
\end{equation*}
for any $v\in C_\infty(\R)$.

{}From the continuous differentiability of $f$ it easily follows that $\widetilde{T}$ is Lipschitz continuous.
By the mean value theorem one has
\begin{equation*}
\begin{split}
\widetilde{T}(\widetilde{u}+v)(x) - (\widetilde{T} \widetilde{u})(x) &= \int_\R \omega(x-y) \left(f(\widetilde{u}(y)+v(y)-h)-f(\widetilde{u}(y)-h)\right) dy\\
=& \int_\R \omega(x-y) f'(a(y)-h) v(y) dy,
\end{split}
\end{equation*}
where $a(y)$ is a point between $\widetilde{u}(y)$ and $\widetilde{u}(y)+v(y)$, $y\in\R$. Hence,
\begin{equation*}
\begin{split}
& \widetilde{T}(\widetilde{u}+v)(x) - (\widetilde{T} \widetilde{u})(x) - (\widetilde{T}'(\widetilde{u})v)(x) \\ & = \int_\R \omega(x-y) \left(f'(a(y)-h)-f'(\widetilde{u}(y)-h)\right) v(y) dy.
\end{split}
\end{equation*}
{}From the H\"{o}lder continuity of $f'$ it follows that
\begin{equation*}
|f'(a(y)-h)-f'(\widetilde{u}(y)-h)| \leq C |a(y)-\widetilde{u}(y)|^\mu \leq C |v(y)|^\mu.
\end{equation*}
Thus,
\begin{equation*}
\begin{split}
|\widetilde{T}(\widetilde{u}+v)(x) - (\widetilde{T} \widetilde{u})(x) - (\widetilde{T}'(\widetilde{u})v)(x)| &\leq C \int_\R |\omega(x-y)| |v(y)|^{1+\mu} dy\\
& \leq C \|v\|_\infty^{1+\mu} \|\omega\|_{L^1(\R)},
\end{split}
\end{equation*}
which implies that the condition (ii) of Theorem \ref{thm:Krein} is fulfilled.
Consequently, by Theorem \ref{thm:Krein} with $X=C_\infty(\R)$, $\widetilde{u}$ is an unstable equilibrium of the equation
\begin{equation}\label{eq:zeit}
u_t = -u + \widetilde{T}u.
\end{equation}
This completes the proof of Theorem \ref{introthm:2}.

%%%%%%%%%%%%%%%%%%%%%%%%%%%%%%%%%%%%%%%%%%%%%%%%%%%%%%%%%%%%
\appendix
\section*{Appendix}
\setcounter{lem}{0}

\begin{lem}\label{lem:app:1}
Under Assumption \ref{ass:w} the function $u_-$ given by \eqref{eq:upm} is a stationary solution of \eqref{eq:1} with $f=\chi_{(0,\infty)}$. Similarly,
$u_+$ is a solution of \eqref{eq:1} with $f=\chi_{(\tau,\infty)}$. In particular, $\supp \chi_{(0,\infty)}(u_-(\cdot)-h)=[-\Delta_-,\Delta_-]$
and $\supp \chi_{(\tau,\infty)}(u_+(\cdot)-h)=[-\Delta_+,\Delta_+]$.
\end{lem}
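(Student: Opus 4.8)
The plan is to reduce the statement to an explicit description of the superlevel sets $\{y\in\R: u_-(y)>h\}$ and $\{y\in\R: u_+(y)>h+\tau\}$. A stationary solution of \eqref{eq:1} with $f=\chi_{(0,\infty)}$ is, by definition, a function $u$ with $u(x)=\int_\R\omega(x-y)\chi_{(0,\infty)}(u(y)-h)\,dy=\int_{\{u>h\}}\omega(x-y)\,dy$; once we know that $\{y:u_-(y)>h\}=(-\Delta_-,\Delta_-)$, this identity for $u=u_-$ is, after discarding the two endpoints (a null set), literally the definition \eqref{eq:upm}, hence automatically satisfied. The same holds for $u_+$ with $f=\chi_{(\tau,\infty)}$, since $\chi_{(\tau,\infty)}(u_+(y)-h)=1$ precisely when $u_+(y)>h+\tau$, once $\{y:u_+(y)>h+\tau\}=(-\Delta_+,\Delta_+)$ is established. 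The support assertions then follow by taking closures. Thus the proof reduces to these two set identities.

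Next I would record elementary facts. Substituting $z=x-y$ in \eqref{eq:upm} yields $u_\pm(x)=\int_{x-\Delta_\pm}^{x+\Delta_\pm}\omega(z)\,dz$, so $u_\pm$ is continuous and even (conditions (ii), (iii) of Assumption \ref{ass:w}) and of class $C^1$ with $u_\pm'(x)=\omega(x+\Delta_\pm)-\omega(x-\Delta_\pm)$. Evaluating at the endpoints and using the relations defining $\Delta_\pm$ in Assumption \ref{ass:w} together with condition (vi), I obtain the boundary values $u_-(\pm\Delta_-)=\int_0^{2\Delta_-}\omega(z)\,dz=h$, $u_+(\pm\Delta_+)=\int_0^{2\Delta_+}\omega(z)\,dz=h+\tau$, and $u_+(\pm d)=h$. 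Since $u_\pm$ is even it suffices to work on $[0,\infty)$, where $u_\pm'(x)=\omega(x+\Delta_\pm)-\omega(|x-\Delta_\pm|)$.

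The heart of the argument is the monotonicity analysis. Because $0<\Delta_-<\Delta_+<d\le a$ (the last inequalities coming from condition (vi)), for $x\in[0,d]$ both $x+\Delta_\pm$ and $|x-\Delta_\pm|$ lie in $[0,2d]$, on which $\omega$ is decreasing by condition (vii); together with $|x-\Delta_\pm|\le x+\Delta_\pm$ this gives $u_\pm'\le 0$ on $[0,d]$, and $u_\pm'<0$ on $(0,\Delta_\pm)$, where $|x-\Delta_\pm|<x+\Delta_\pm$ and the strict decrease of $\omega$ applies. Hence $u_-$ is strictly decreasing on $[0,\Delta_-]$ and nonincreasing on $[0,d]$, so $u_-(x)>h$ for $x\in[0,\Delta_-)$ and $u_-(x)\le h$ for $x\in[\Delta_-,d]$; the analogue for $u_+$ holds relative to the level $h+\tau$ on $[0,\Delta_+]$ and $[\Delta_+,d]$. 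For $|x|>d$, Lemma \ref{lem:blin} (equivalently, condition (vii)) gives $\omega(x-y)\le\omega(d-y)$ for all $y\in[-\Delta_\pm,\Delta_\pm]\subset[-d,d]$, so $u_\pm(x)\le u_\pm(d)$; hence $u_-(x)\le u_-(d)\le h$ and $u_+(x)\le u_+(d)=h<h+\tau$. Combining these inequalities with evenness yields $\{y:u_-(y)>h\}=(-\Delta_-,\Delta_-)$ and $\{y:u_+(y)>h+\tau\}=(-\Delta_+,\Delta_+)$, which is all that remained to be shown.

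The main difficulty lies in the monotonicity step, and within it in the passage from ``$\ge$'' to strict ``$>$'' on the open intervals $(-\Delta_\pm,\Delta_\pm)$: it is this strictness that pins the superlevel sets down to the full open intervals rather than to smaller ones (and without it $u_\pm$ could be constant at the threshold near $\pm\Delta_\pm$, so that $u_\pm$ would fail to be a stationary solution at all). It rests on reading condition (vii) as \emph{strict} monotonicity of $\omega$ on $[0,2d]$, i.e. the strict form of \eqref{eq:cond} from Lemma \ref{lem:blin}, together with the bookkeeping that the arguments $x\pm\Delta_\pm$ stay inside $[0,2d]$ for $x\in[0,d]$, which is exactly where $\Delta_\pm<d$ enters. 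The remaining manipulations are routine.
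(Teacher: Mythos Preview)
Your argument is correct. The route differs from the paper's in presentation, though both rest on the same input, the (strict) monotonicity of $\omega$ on $[0,2d]$ from condition~(vii). The paper never differentiates $u_\pm$; instead, for $x\in[0,\Delta_-]$ it rewrites
\[
u_-(x)=h+\int_0^{\Delta_--x}\bigl(\omega(z)-\omega(z+x+\Delta_-)\bigr)\,dz\ge h,
\]
and for $x>\Delta_-$ it splits $[-\Delta_-,\Delta_-]$ into the sets where $0<x-y<2d$ and where $x-y>2d$ and compares $\omega(x-y)$ with $\omega(\Delta_--y)$ pointwise, obtaining $u_-(x)<h$ directly. Your approach uses instead the explicit derivative $u_\pm'(x)=\omega(x+\Delta_\pm)-\omega(x-\Delta_\pm)$ to establish monotonicity on $[0,d]$, and then invokes Lemma~\ref{lem:blin} for $|x|>d$; this is a bit more systematic and makes the role of the boundary values $u_\pm(\Delta_\pm)$ transparent, at the mild cost of noting that $u_\pm\in C^1$ (which follows from continuity of $\omega$). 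The paper's integral splitting yields the slightly stronger conclusion $u_-(x)<h$ for all $x>\Delta_-$, whereas you only obtain $u_-(x)\le h$ there; but, as you correctly observe, the weak inequality suffices for the set identity $\{u_->h\}=(-\Delta_-,\Delta_-)$ and hence for the lemma. Both arguments read ``decreasing'' in (vii) as strict, and both need this for exactly the point you flag in your last paragraph.
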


\begin{proof}
To prove that $u_-$ solves \eqref{eq:1} with $f=\chi_{(0,\infty)}$ it suffices to show that
\begin{equation*}
 u_-(x)\geq h\quad \text{for all}\quad x\in[0,\Delta_-]
\end{equation*}
and
\begin{equation*}
 u_-(x)< h\quad \text{for all}\quad x\in(\Delta_-,\infty).
\end{equation*}

If $x\in[0,\Delta_-]$ we represent $u_-$ as follows
\begin{equation*}
\begin{split}
 u_-(x) &= \int_{x-\Delta_-}^{x+\Delta_-} \omega(z) dz\\
&= \int_0^{2\Delta_-} \omega(z) dz - \int_{x+\Delta_-}^{2\Delta_-} \omega(z) dz + \int_{x-\Delta_-}^0 \omega(z) dz.
\end{split}
\end{equation*}
Observe that the first integral equals $h$. Using the symmetry of $\omega(z)$ we, thus, obtain
\begin{equation*}
 u_-(x) = h + \int_0^{\Delta_- -x }(\omega(z)-\omega(z+x+\Delta_-)) dz \geq h
\end{equation*}
by the condition (vii) of Assumption \ref{ass:w}.

If $x>\Delta_-$ we represent $u_-$ as
\begin{equation*}
 u_-(x) = \int_{I_1} \omega(x-y) dy + \int_{I_2} \omega(x-y) dy,
\end{equation*}
where
\begin{equation*}
 I_1:=\{y\in [-\Delta_-,\Delta_-]\, :\, 0< x-y < 2d\}
\end{equation*}
and
\begin{equation*}
 I_2:=\{y\in [-\Delta_-,\Delta_-]\, :\, 2d< x-y\}.
\end{equation*}
For any $y\in I_1$ we have $\omega(x-y) < \omega(\Delta_- -y)$ since $\omega$ is decreasing on $[0,2d]$ by the condition (vii) of Assumption \ref{ass:w}.
If $y\in I_2$, then again by the condition (vii) of Assumption \ref{ass:w} we obtain the inequality
\begin{equation*}
 \omega(x-y) < \omega(2d) < \omega(\Delta_- - y).
\end{equation*}
Hence, in both cases the inequality
\begin{equation*}
 u_-(x) < \int_{-\Delta_-}^{\Delta_-} \omega(\Delta_- - y)dy = h
\end{equation*}
is valid.

That $u_+$ is a solution to \eqref{eq:1} with $f=\chi_{(\tau,\infty)}$ can be proved in the same way.
\end{proof}

\begin{lem}\label{lem:app:2}
 If the integral kernel $\omega$ satisfies Assumption \ref{ass:w} and the condition (i) of Theorem \ref{introthm:2}, then the stationary solution $\widetilde{u}$ referred to in Theorem \ref{introthm:1} is continuously differentiable.
\end{lem}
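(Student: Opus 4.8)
The plan is to recognize $\widetilde u$ as a convolution and to transfer the merely \emph{weak} differentiability of $\omega$ into genuine $C^1$-regularity of $\widetilde u$ by means of Fubini's theorem together with the fundamental theorem of calculus. Set $g(y):=f(u_\ast(y)-h)$ for $y\in[-d,d]$ and $g(y):=0$ otherwise, where $u_\ast\in C([-d,d])$ is the fixed point of $T$ giving rise to $\widetilde u$; since $0\le f\le 1$ and $u_\ast$ is continuous, $g\in L^1(\R)\cap L^\infty(\R)$ and $g$ has compact support. Then $\widetilde u=\omega*g$, and because $\omega$ is bounded and continuous (Assumption \ref{ass:w}(ii)), $\widetilde u$ is well defined and continuous on all of $\R$ (dominated convergence, with dominating function $\|\omega\|_\infty\,|g|$). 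As $u_\ast=\widetilde u|_{[-d,d]}$, it suffices to show $\widetilde u\in C^1(\R)$.

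First I would record the consequences of $\omega\in W^{1,\infty}(\R)$: after modification on a null set $\omega$ is Lipschitz, hence absolutely continuous on every bounded interval, so $\omega(b)-\omega(a)=\int_a^b\omega'(t)\,dt$ for all $a,b\in\R$, with $\omega'\in L^\infty(\R)$. Next, put $h_1:=\omega'*g$. Writing $h_1(x)=\int_\R\omega'(z)\,g(x-z)\,dz$ and estimating $|h_1(x)-h_1(x')|\le\|\omega'\|_\infty\,\|g(x-\cdot)-g(x'-\cdot)\|_{L^1}$, continuity of translation in $L^1$ yields that $h_1$ is bounded and uniformly continuous on $\R$; this is just the standard fact $L^1*L^\infty\subset C_b(\R)$.

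The key step is the integrated identity
\begin{equation*}
\int_{x_0}^{x} h_1(s)\,ds = \widetilde u(x)-\widetilde u(x_0)\qquad\text{for all }x_0,x\in\R.
\end{equation*}
Since $\int_{x_0}^{x}\!\int_\R|\omega'(s-y)|\,|g(y)|\,dy\,ds\le|x-x_0|\,\|\omega'\|_\infty\,\|g\|_{L^1}<\infty$, Fubini's theorem applies and gives $\int_{x_0}^x h_1(s)\,ds=\int_\R g(y)\bigl(\int_{x_0}^x\omega'(s-y)\,ds\bigr)\,dy=\int_\R g(y)\bigl(\omega(x-y)-\omega(x_0-y)\bigr)\,dy=\widetilde u(x)-\widetilde u(x_0)$, where the inner integral is evaluated using the absolute-continuity identity from the previous step. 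Because $h_1$ is continuous, applying the fundamental theorem of calculus to $x\mapsto\widetilde u(x_0)+\int_{x_0}^x h_1(s)\,ds=\widetilde u(x)$ shows that $\widetilde u$ is differentiable everywhere with $\widetilde u'=h_1\in C(\R)$; hence $\widetilde u\in C^1(\R)$ and, in particular, $u_\ast\in C^1([-d,d])$.

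The one delicate point — and the one I expect to be the main obstacle, worth spelling out explicitly — is that $\omega$ is a priori only weakly differentiable, so one may not simply differentiate under the integral sign; the Fubini/FTC detour is precisely what legitimizes $\widetilde u'=\omega'*g$ as a classical, pointwise identity, and as a bonus it delivers the continuity of $\widetilde u'$ at no extra cost. Everything else (membership of $g$ in $L^1\cap L^\infty$, applicability of Fubini, continuity of translation in $L^1$, and the absolute continuity of $W^{1,\infty}$ functions) is routine.
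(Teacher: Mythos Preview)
Your proof is correct and follows essentially the same route as the paper's: both identify $\widetilde{u}'=\omega'*g$ with $g=f(\widetilde{u}-h)$ and establish its continuity via the continuity of translations in $L^1$. Your Fubini/FTC detour is in fact more careful than the paper's argument, which simply asserts that the derivative exists and is given by $\int_\R\omega'(x-y)f(\widetilde{u}(y)-h)\,dy$ without addressing the weak-differentiability issue you flag.
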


\begin{proof}
{}From \eqref{eq:utilde} it follows that the derivative $\widetilde{u}'(x)$ exists for each $x\in\R$ and is given by
\begin{equation*}
 \widetilde{u}'(x) = \int_{\R} \omega'(x-y) f(\widetilde{u}(y)-h) dy =\int_{\R} \omega'(y) f(\widetilde{u}(x-y)-h) dy.
\end{equation*}
Hence, since $\|\omega'\|_{L^\infty(\R)}<\infty$, for any $\delta>0$ we have
\begin{equation*}
\begin{split}
 |\widetilde{u}'(x+\delta)-\widetilde{u}'(x)| & \leq \int_{\R} |\omega'(y)| |f(\widetilde{u}(x+\delta-y)-h)-f(\widetilde{u}(x-y)-h)| dy\\
& \leq \|\omega'\|_{L^\infty(\R)} \int_\R |f(\widetilde{u}(x+\delta-y)-h)-f(\widetilde{u}(x-y)-h)| dy\\
& = \|\omega'\|_{L^\infty(\R)} \|f(\widetilde{u}(\cdot+\delta)-h)-f(\widetilde{u}(\cdot)-h)\|_{L^1(\R)}.
\end{split}
\end{equation*}
By the continuity of translations in $L^1(\R)$ we obtain that $|\widetilde{u}'(x+\delta)-\widetilde{u}'(x)|\to 0$ as $\delta\to 0$, thus proving
that $\widetilde{u}'\in C(\R)$.
\end{proof}

%%%%%%%%%%%%%%%%%%%%%%%%%%%%%%%%%%%%%%%%%%%%%%%%%%%%%%%%%%%%%


\begin{thebibliography}{50}
%%%%%%%%%%%%%%%%%%%%%%%%%%%%%%%%%%%%%%%%%%%%%%%%%%%%%%%%%%%%%

\bibitem{Amann} H.~Amann, \textit{Fixed point equations and nonlinear eigenvalue problems in ordered Banach spaces},
SIAM Rev.  \textbf{18}  (1976), 620 -- 709.

\bibitem{Amari} S.~Amari, \textit{Dynamics of pattern formation in lateral-inhibition type neural fields}, Biol. Cybernetics \textbf{27} (1977), 77 -- 87.

\bibitem{Coombes} S.~Coombes, \textit{Waves, bumps, and patterns in neural field theories},  Biol. Cybernetics  \textbf{93}  (2005), 91 -- 108.

\bibitem{Daleckij:Krein} Ju.~L.~Dalec'ki{\u\i} and M.~G.~Kre{\u\i}n, \textit{Stability of Solutions of Differential Equations in Banach Space}, Translations of Mathematical Monographs, Vol.~43. Amer. Math. Soc., Providence, R.I., 1974.

\bibitem{Dunford:Schwartz} N.~Dunford and J.~T.~Schwartz, \textit{Linear Operators. Part I: General Theory}, New York, Interscience, 1958.

\bibitem{Guo} D.~Guo and V.~Lakshmikantham, \textit{Nonlinear Problems in Abstract Cones}, Academic Press, Boston, 1988.

\bibitem{Ermentrout} B.~Ermentrout, \textit{Neural networks as spatio-temporal pattern-forming
systems}, Rep. Prog. Phys. \textbf{61} (1998) 353 -- 430.

\bibitem{Henry} D.~Henry, \textit{Geometric Theory of Semilinear Parabolic Equations}, Lecture Notes in Math. No. 840, Springer-Verlag, 1981.

\bibitem{Kishimoto:Amari} K.~Kishimoto and S.~Amari, \textit{Existence and stability of local excitations
in homogeneous neural fields}, J. Math. Biology \textbf{7} (1979), 303 -- 318.

\bibitem{Krein:Rutman} M.~G.~Kre{\u\i}n and M.~A.~Rutman, \textit{Linear operators leaving invariant a cone in a Banach space}, Uspekhi Mat. Nauk, \textbf{3} (1948), 3 -- 95 (in Russian). \MR{27128} %\mathscinet{http://www.ams.org/mathscinet-getitem?mr=27128}

%\bibitem{Gallay} Th.~Gallay, \textit{A center-stable manifold theorem for differential equations in Banach spaces}, %Commun. Math. Phys. \textbf{152} (1993), 249 -- 268.

%\bibitem{Hofer} H.~Hofer, \textit{A note on the topological degree
%at a critical point of mountainpass-type}, Proc.
%Amer. Math. Soc. \textbf{90} (1984), 309 -- 315.

%\bibitem{Li:Wang} S.~Li and Z.-Q.~Wang, \textit{Mountain pass theorem in order intervals and multiple
%solutions for semilinear elliptic Dirichlet problems}, J. Anal. Math. \textbf{81} (2000), 373 -- 396.

%\bibitem{Mielke} A.~Mielke, \textit{Locally invariant manifolds for quasilinear parabolic equations},
%Rocky Mount. J. Math. \textbf{21} (1991), 707 -- 714.

\bibitem{A} A.~Oleynik, A.~Ponosov, and J.~Wyller,
\textit{Iterative schemes for bump solutions in a neural field model},
submitted.

\bibitem{Potthast:Graben} R.~Potthast and P.~beim Graben, \textit{Existence and properties of solutions for
neural field equations}, Math. Meth. Appl. Sci. \textbf{33} (2010), 935 -- 949.

%\bibitem{Rabinowitz} P.~H.~Rabinowitz, \textit{Minimax Methods in Critical Point Theory with Applications
%to Differential Equations}, Regional Conference Seies in Mathematics, No.~65. Amer. Math. Soc., Providence, R.I., 1986.

%\bibitem{Struwe} M.~Struwe, \textit{A note on a result of Ambrosetti and Mancini},  Ann. Mat. Pura Appl. \textbf{131}  %(1982), 107 -- 115.

%\bibitem{Struwe:book} M.~Struwe, \textit{Variational Methods}, Springer, Berlin, 1996.

%\bibitem{Willem} M.~Willem, \textit{Minimax Theorems}, Birkh\"{a}user, Boston, 1996.

\bibitem{Zeidler} E.~Zeidler, \textit{Nonlinear Functional Analysis, Vol.1: Fixed-Point Theorems}, Springer, 1986.

\end{thebibliography}
\end{document}